\def\cR{\boldsymbol{\mathcal{R}}}
\def\cS{\boldsymbol{\mathcal{S}}}
\def\cQ{\mathcal{Q}}
\def\eps{\varepsilon}
\def\0{{\bf 0}}
\def\div{\mathrm{div}}
\def\T{\mathbb{T}}
\def\eps{\varepsilon}
\def\Di{\mathrm{D}}
\def\Re{\mathrm{Re}}
\def\We{\mathrm{We}}
\def\e{\mathrm{e}}
\newtheorem{theorem}{Theorem}
\newtheorem{lemma}{Lemma}
\newtheorem{corollary}[theorem]{Corollary}
\newtheorem{definition}{Definition}
\newtheorem{proposition}[lemma]{Proposition}
\newtheorem{remark}{Remark}
\newtheorem{notation}{Notation}
\begin{document}

\title{Viscoelastic flows in a rough channel:\\ a multiscale analysis}
\author{Laurent Chupin} 
\address{Universit\'e Blaise Pascal, Laboratoire de Math\'ematiques (CNRS UMR 6620), Campus des C\'ezeaux, 63177 Aubi\`ere cedex, France} 
\email{\texttt{laurent.chupin@math.univ-bpclermont.fr}}

\author{S\'ebastien Martin}
\address{Universit\'e Paris Descartes, Laboratoire MAP5 (CNRS UMR 8145), 45 rue des Saints-P\`eres, 75270 Paris cedex 06, France}
\email{\texttt{sebastien.martin@parisdescartes.fr}}

\date{\today}

\maketitle

\begin{abstract}
In this paper, we consider {\em viscoelastic} flows in a rough domain (with typical roughness patterns of size~$\eps \ll 1$). We present and rigorously justify an asymptotic expansion with respect to~$\eps$, at any order, based upon the definition of elementary problems: Oldroyd-type problems at the global scale defined on a smoothened domain and boundary-layer corrector problems. The resulting analysis guarantees optimality with respect to the truncation error.
\end{abstract}


\section{Introduction}\label{intro}

Many studies investigate the effect of wall roughness on Newtonian flows. In 1827, C. L. Navier \cite{Na27} was one of the first scientists to note that the roughness could drag a fluid. Since then, numerous studies attempted to prove mathematical results in this direction, see for instance the works of W.~J\"ager and A.~Mikelic~\cite{JM01}, Y.~Amirat and co-authors~\cite{ABLS01,AS96} and more recently the works of D.~Bresch and V.~Milisic~\cite{BMi10}. Note that all these works formulate the roughness using a periodic function (whose amplitude and period are supposed to be small). In a context of more general ``roughness'' patterns, there exists similar recent results, see~\cite{BaGV08,GV08}. All the previous work deal with a {\em Newtonian} flow, for which the Stokes or Navier-Stokes equations are classically considered.

Much literature research has been devoted to non-Newtonian fluids, in both mathematical aspects and applications. It is well known that numerous biological fluids, blood or physiological secretions like tears or synovial fluids, show these non-Newtonian characteristics. In engineering applications people are interested in controlling the flows characteristics to suit various requirements such as maintaining the fluid qualities in a wide range of temperatures and stresses. Introduction of additives lead to non-Newtonian behavior of the modern lubricants for instance. Another application domain is linked to polymers, whose non-Newtonian characteristics appear in a wide range of applications such as the molding or injection processes. Some particular classes of non-Newtonian models have often been considered. This includes the Bingham flow or the quasi-Newtonian fluids (Carreau's law \cite{CaKD79,Ke95}, the power law or Williamson's law, in which various stress-velocity relations are chosen \cite{TW98} or \cite{BlM95,BT95} for mathematical aspects) and also micropolar ones \cite{Lu99}. These models, however, consider the fluid as viscous and elasticity effects are neglected. The introduction of such a viscoelastic behavior is primarily described by the Weissenberg number, denoted~$\We$ which can be viewed as a measure of the elasticity of the fluid and is related to its characteristic relaxation time. One of the laws which seems the most able to describe viscoelastic flows is the Oldroyd model. This model is based on a constitutive equation which is an interpolation between purely viscous and purely elastic behaviors, thus introducing a supplementary parameter~$r$ which describes the relative proportion of both behaviors (the solvent to solute ratio). Considering the Oldroyd model \cite{Ol50}, the momentum, continuity and constitutive equations for an incompressible flow of such a non-Newtonian fluid are, respectively,
\begin{equation}\label{equation_base1}
\rho \bigg( \partial_{t} u + u \cdot \nabla u \bigg) - \eta (1-r) \Delta u + \nabla p - \div ~ \sigma = 0,
\end{equation}
\begin{equation}\label{equation_base2}
\div ~ u = 0,
\end{equation}
\begin{equation}\label{equation_base3}
\lambda \bigg( \partial_{t} \sigma + u \cdot \nabla \sigma + g_a(\nabla u,\sigma) \bigg) + \sigma - \delta \Delta \sigma = 2 \eta r \mathbb D(u). 
\end{equation}
\indent In these equations, $\rho$, $\eta$ and~$\lambda$ are positive constants which respectively correspond to the fluid density, the fluid viscosity and the relaxation time.
It is important to notice the presence of a term~$\delta \Delta \sigma$, $\delta >0$ corresponding to a spatial diffusion of the polymeric stresses. Usually ({\it i.e.} for the Oldroyd model) this term is deleted, but it can be physically justified: physical effects that can contribute to the diffusive process include hydrodynamic interactions \cite{EKL89}, particle diffusion \cite{BMAB93} and semiflexibility of polymer blends \cite{LF93}. Thus, the diffusive Oldroyd models have been the subject of intense studies related to the understanding of shear-banded flows or phase coexistence, see \cite{OL97,SYC96,OL99,LOB00,ORL00} and, as a consequence, the investigation of the mathematical properties of the diffusive model has gained an increasing interest recently~\cite{EKL89,BSS05,CK12,CM14}.

Equations \eqref{equation_base1}--\eqref{equation_base3} make up a system of~$10$ scalar equations with~$10$ unknowns: the lubricant velocity vector $u=(u_i)_{1\leq i\leq 3}$, the pressure~$p$ and the extra-stress symmetric tensor $\sigma=(\sigma_{i,j})_{1\leq i,j\leq 3}$.
The bilinear application~$g_a$, $-1\leq a \leq 1$, is defined by
$$g_a(\nabla u,\sigma) = \sigma \cdot \mathbb W(u) - \mathbb W(u) \cdot \sigma - a(\sigma \cdot \mathbb D(u) + \mathbb D(u) \cdot \sigma)$$
where~$\mathbb D(u)$ and~$\mathbb W(u)$ are respectively the symmetric and skew-symmetric parts of the velocity gradient~$\nabla u$. Usually, $\mathbb D(u)$ is called the rate of strain tensor and~$\mathbb W(u)$ is called the vorticity tensor. Notice that the parameter~$a$ is considered to interpolate between upper convected ($a=1$) and lower convective derivatives ($a=-1$), the case $a=0$ being the corotational case \cite{Jo98}. Note that taking $r=1$ allows us to recover various forms of the generalized Maxwell model. Conversely, a Newtonian flow is described by choosing $r=0$.

In this paper, we focus on {\em viscoelastic} flows in a rough domain (with typical roughness patterns of size~$\varepsilon \ll 1$). We present and rigorously justify an asymptotic expansion with respect to~$\varepsilon$. The development is done at any order, so that we are guaranteed to be optimal with respect to the truncation error. We also highlight the particular effects of roughness.

Several relevant questions are not addressed in this article. First, recent works on random roughness, see~\cite{BaGV08,GV08}, could make us think that our results can be extended to more general cases of roughness. In fact, the construction of our development strongly depends on the behavior of solutions of the Stokes equation on a half-space, whose lower boundary is periodic. The behavior of such solutions must be sufficiently decreasing at infinity to justify our development. Unfortunately, it seems that this decrease is only logarithmic in the case of a random boundary (while it is exponential in our periodic case). Second, another task related to the regularity of the roughness patterns is not addressed in this paper: what is the behavior of the solution when the patterns are not Lipschitz continuous? In particular, what is the influence of roughness jump discontinuities over the flow?
Finally, the choice to make appear a spatial diffusion ($\delta >0$) in the Oldroyd model could by argue. From a mathematical point of view, it is clearly an advantage since we know that such diffusion allows to have solutions to the initial problem~\eqref{equation_base1}--\eqref{equation_base3}.
Nevertheless, even if we admit the existence of a smooth solution to the initial problem without diffusion ($\delta = 0$), the development proposed with respect to the roughness parameter~$\varepsilon$ seems to be unsuitable.

The paper is composed of five sections. In Section~\ref{Section2}, we introduce the diffusive Oldroyd model, we precisely describe the roughness geometry and we recall a fundamental result: there exists a solution~$(u,p,\sigma)$ to the model. In Section~\ref{Section3} we introduce the ansatz, which is a formal asymptotic expansion of the solution~$(u,p,\sigma)$ with respect to the roughness parameter~$\varepsilon \ll 1$. By identifying the powers of~$\varepsilon$ in such a development, we obtain some elementary problems at any order. Section~\ref{Section4} is devoted to the mathematical study of these elementary problems: well-posedness and properties. Section~\ref{Section5} provides a rigorous justification of the asymptotic expansion by analyzing the remainder and deriving error estimates. In Section~\ref{Section6}, we show that it is possible to effectively determine all the contributions of the ansatz. We will notice that regarding each elementary problem and their overlaps, this crucial result is not obvious: we prove that the solution of each problem can be built using the only previous elementary solutions.

\section{The diffusive Oldroyd system in a rough channel: statement of the problem}\label{Section2}

As a matter of fact, the derivation of reduced models is crucial if one aims at performing numerical simulations of the flow. However, roughness patterns lead to a sharp increase of the computational costs because the mesh of the domain has to be built according to the constraints defined by the roughness patterns. In order to avoid such a costly procedure, reduced models can be defined by considering the fluid flow in the smooth domain (therefore avoiding heavy costs in terms of numerical computations by using coarse meshes) and adding a so-called boundary layer correction which takes into account the influence of the roughness pattern. More precisely, for a {\em Newtonian} flow, we can proceed as follows (see \cite{Na27,APV98,JM01,BMi10}, and also \cite{GV08,BaGV08} for random roughness patterns):
\begin{itemize}
\item {\em at order 0}: assume that~$u$ is the velocity fluid associated to the rough domain~$\omega_{\varepsilon}$ and~$u_0$ is the velocity fluid associated to the smooth domain (i.e. the domain has been truncated by considering a {\em flat} boundary instead of the oscillating one and imposing the Dirichlet condition on the smooth boundary), see Fig.~\ref{fig:roughnewtonian}. Then one has 
$$\| u - u_0 \|_{L^2(\omega_{\varepsilon})} \lesssim \varepsilon
\quad \text{and} \quad
\| u - u_0 \|_{H^1(\omega_{\varepsilon})} \lesssim \sqrt{\varepsilon}.$$
\item {\em at order 1}: in order to counterbalance the error introduced by the truncation of the domain, it is possible to introduce a corrector term which leads to the justification of wall laws. By considering the smooth domain with a slip condition defined as 
$$
\left\{\begin{aligned}
& u^{(1)} = \alpha_{11} \, \partial_{y} u^{(1)} + \alpha_{12} \, \partial_{y} u^{(2)} \\
& u^{(2)} = \alpha_{21} \, \partial_{y} u^{(1)} + \alpha_{22} \, \partial_{y} u^{(2)} \\
& u^{(3)} = 0
\end{aligned}
\right.
$$
on the smooth boundary (the reals~$\alpha_{i,j}$ being coefficients depending on the roughness shape), a velocity field~$\overline{u}$ is defined, leading to a refinement of the approximation.
For instance we have (see \cite{JM01})
$$\| u - \overline{u} \|_{L^2(\omega_{\varepsilon})} \lesssim \varepsilon^{3/2}
\quad \text{and} \quad
\| u - \overline{u} \|_{H^1(\omega_{\varepsilon})} \lesssim \varepsilon.$$
\end{itemize}
\begin{figure}[hbtp]
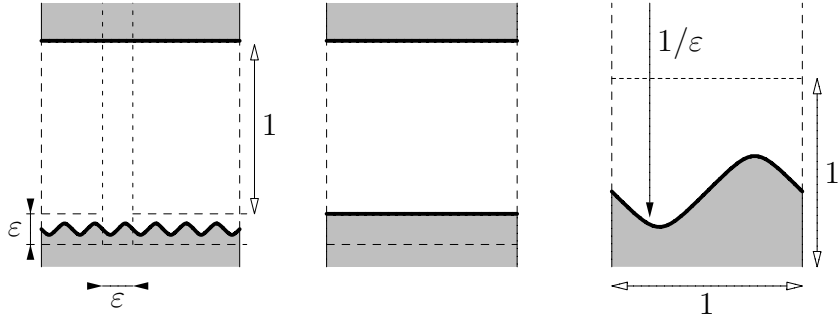

\centertexdraw{
\setgray 0
\drawdim cm \linewd 0.01
 \arrowheadsize l:0.20 w:0.10
\move(-5 3)\lvec(-5 2.5) \lvec(-2.4 2.5)\lvec(-2.4 3) \lfill f:0.75
\move(-5 -0.5) \lvec(-5 0) 
\clvec(-4.9 -0.1)(-4.9 -0.1)(-4.8 0) \clvec(-4.7 0.1)(-4.7 0.1)(-4.6 0) 
\clvec(-4.5 -0.1)(-4.5 -0.1)(-4.4 0) \clvec(-4.3 0.1)(-4.3 0.1)(-4.2 0) 
\clvec(-4.1 -0.1)(-4.1 -0.1)(-4.0 0) \clvec(-3.9 0.1)(-3.9 0.1)(-3.8 0) 
\clvec(-3.7 -0.1)(-3.7 -0.1)(-3.6 0) \clvec(-3.5 0.1)(-3.5 0.1)(-3.4 0) 
\clvec(-3.3 -0.1)(-3.3 -0.1)(-3.2 0) \clvec(-3.1 0.1)(-3.1 0.1)(-3.0 0) 
\clvec(-2.9 -0.1)(-2.9 -0.1)(-2.8 0) \clvec(-2.7 0.1)(-2.7 0.1)(-2.6 0) 
\clvec(-2.5 -0.1)(-2.5 -0.1)(-2.4 0) 
\lvec(-2.4 -0.5)
\lfill f:0.75
\move(-2.2 1.5) \arrowheadtype t:T \avec(-2.2 2.45)
\move(-2.2 1.5) \arrowheadtype t:T \avec(-2.2 0.20)
\move(-2.1 1.25) \textref h:L v:B \htext{{$1$}}
\drawdim cm \linewd 0.01
\lpatt(0.1 0.1) 
\move(-5 0) \lvec(-5 2.5) 
\move(-2.4 0) \lvec(-2.4 2.5) 
\move(-5.2 0.2) \lvec(-4.2 0.2) \move(-3.8 0.2) \lvec(-2.3 0.2)
\move(-5.2 -0.2) \lvec(-2.3 -0.2)
\drawdim cm \linewd 0.05
\lpatt(0.1 0) 
\move(-5 2.5) \lvec(-2.4 2.5)
\move(-5 0) 
\clvec(-4.9 -0.1)(-4.9 -0.1)(-4.8 0) \clvec(-4.7 0.1)(-4.7 0.1)(-4.6 0) 
\clvec(-4.5 -0.1)(-4.5 -0.1)(-4.4 0) \clvec(-4.3 0.1)(-4.3 0.1)(-4.2 0) 
\clvec(-4.1 -0.1)(-4.1 -0.1)(-4.0 0) \clvec(-3.9 0.1)(-3.9 0.1)(-3.8 0) 
\clvec(-3.7 -0.1)(-3.7 -0.1)(-3.6 0) \clvec(-3.5 0.1)(-3.5 0.1)(-3.4 0) 
\clvec(-3.3 -0.1)(-3.3 -0.1)(-3.2 0) \clvec(-3.1 0.1)(-3.1 0.1)(-3.0 0) 
\clvec(-2.9 -0.1)(-2.9 -0.1)(-2.8 0) \clvec(-2.7 0.1)(-2.7 0.1)(-2.6 0) 
\clvec(-2.5 -0.1)(-2.5 -0.1)(-2.4 0) 
\drawdim cm \linewd 0.01
\lpatt(0.1 0)
\move(-5.15 -0.25)\arrowheadtype t:F \avec(-5.15 -0.2)
\move(-5.15  0.25)\arrowheadtype t:F \avec(-5.15  0.20)
\move(-5.15  0.25)\lvec(-5.15 -0.25)
\move(-4.3 -0.75)\arrowheadtype t:F \avec(-4.2 -0.75)
\move(-3.7 -0.75)\arrowheadtype t:F \avec(-3.8 -0.75)
\move(-4.2 -0.75)\lvec(-3.8 -0.75)
\lpatt(0.05 0.1) 
\move(-4.2 -0.2)\lvec(-4.2 3)
\move(-3.8 -0.2)\lvec(-3.8 3)
\lpatt(0.1 0)
\move(-4 -0.85) \textref h:C v:T \htext{$\varepsilon$}
\move(-5.25 0) \textref h:R v:C \htext{$\varepsilon$}
\lpatt(0.1 0) 
\drawdim cm \linewd 0.01
\move(-1.25  3.0) \lvec(-1.25 2.5)\lvec(1.25 2.5)\lvec(1.25  3.0) \lfill f:0.75
\move(-1.25 -0.5) \lvec(-1.25 0.2)\lvec(1.25 0.2)\lvec(1.25 -0.5) \lfill f:0.75
\lpatt(0.1 0.1) 
\move(-1.25 0.2) \lvec(-1.25 2.5) 
\move( 1.25 0.2) \lvec( 1.25 2.5) 
\drawdim cm \linewd 0.05
\lpatt(0.1 0) 
\move(-1.25 2.5)\lvec(1.25 2.5)
\move(-1.25 0.2)\lvec(1.25 0.2)
\drawdim cm \linewd 0.01
\lpatt(0.1 0.1) 
\move(-1.25 -0.2)\lvec(1.25 -0.2)

\lpatt(0.1 0) 
\drawdim cm \linewd 0.01
\move(2.5 -0.5) \lvec( 2.5 0.5) \clvec(3.125 -0.125)(3.125 -0.125)(3.75 0.5) \clvec(4.375 1.125)(4.375 1.125)(5 0.5) \lvec(5 -0.5) \lfill f:0.75
\move(5.2  1.0) \arrowheadtype t:T \avec(5.2  2.0)
\move(5.2  1.0) \arrowheadtype t:T \avec(5.2 -0.5)
\move(5.3 0.75) \textref h:L v:C \htext{{$1$}}
\move(3.5 -0.75) \arrowheadtype t:T \avec(2.5 -0.75)
\move(3.5 -0.75) \arrowheadtype t:T \avec(5.0 -0.75)
\move(3.75 -0.85) \textref h:C v:T \htext{{$1$}}
\move(3.0 3) \arrowheadtype t:F \avec(3.0 0.15)
\move(3.1 2.25) \textref h:L v:B \htext{{${1/\varepsilon}$}}
\lpatt(0.1 0.1) 
\move(2.5 0.5) \lvec(2.5 3.0) 
\move(5.0 0.5) \lvec(5.0 3.0) 
\lpatt(0.05 0.05) 
\move(2.5 2) \lvec(5 2)
\drawdim cm \linewd 0.05
\lpatt(0.1 0) 
\move( 2.5 0.5) \clvec(3.125 -0.125)(3.125 -0.125)(3.75 0.5) \clvec(4.375 1.125)(4.375 1.125)(5 0.5)
}\caption{a) Rough domain.  b) {\em Flattened} domain: the rough boundary has been truncated by a smooth one. c) Boundary layer domain corresponding to a focus on the vicinity of the rough periodic boundary.}\label{fig:roughnewtonian}
\end{figure}

For a non-Newtonian flow of the viscoelastic type, we aim at describing the roughness effects {\em extensively} in the following sense:
\begin{itemize}
\item build an asymptotic expansion based upon elementary solutions (i.e. solutions of problems defined on smooth domains, thus avoiding complex geometries)~;
\item prove in a rigorous way that the asymptotic expansion is valid at any order~;
\item define an algorithm associated to an efficient numerical procedure.
\end{itemize}

Therefore we consider the Navier-Stokes equations with the Oldroyd model in a rough channel. Consider the domain
$$
\omega_{\varepsilon}:=\left\{ (x,y)\in \T^{d}\times \mathbb R,\ -\varepsilon H\left(\frac{x}{\varepsilon}\right)<y<1 \right\},
$$
where~$\T^d$ is the~$d$-dimensional torus, $d=1$ or $d=2$, and~$H$ a smooth periodic and positive function. The boundary~$\omega_\varepsilon$ is denoted~$\gamma_\varepsilon$ and it is composed of two connex components: The upper {\em smooth} boundary $\gamma^+ = \T^d\times \{1\}$ and the upper {\em highly oscillating} boundary which is denoted~$\gamma_\varepsilon^-$, see Fig.~\ref{fig:roughchannel}.

\begin{figure}[hbtp]
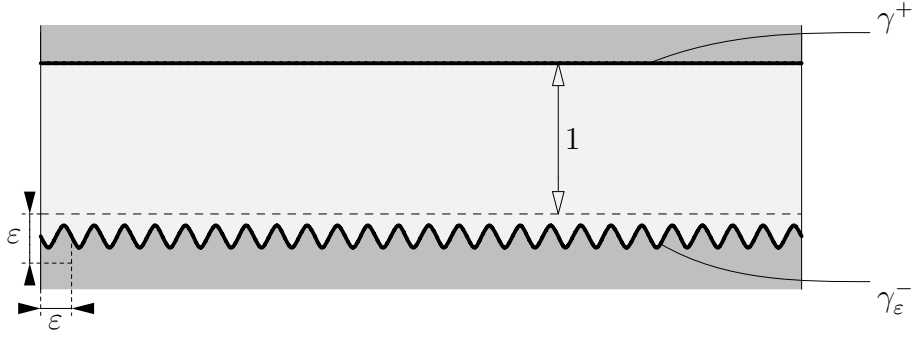

\centertexdraw{
\setgray 0
\drawdim cm \linewd 0.01
\arrowheadsize l:0.30 w:0.15
\move(-5 -0.5) \lvec(-5 0.2) 
\clvec(-4.9 0)(-4.9 0)(-4.8 0.2) \clvec(-4.7 0.4)(-4.7 0.4)(-4.6 0.2) 
\clvec(-4.5 0)(-4.5 0)(-4.4 0.2) \clvec(-4.3 0.4)(-4.3 0.4)(-4.2 0.2) 
\clvec(-4.1 0)(-4.1 0)(-4.0 0.2) \clvec(-3.9 0.4)(-3.9 0.4)(-3.8 0.2) 
\clvec(-3.7 0)(-3.7 0)(-3.6 0.2) \clvec(-3.5 0.4)(-3.5 0.4)(-3.4 0.2) 
\clvec(-3.3 0)(-3.3 0)(-3.2 0.2) \clvec(-3.1 0.4)(-3.1 0.4)(-3.0 0.2) 
\clvec(-2.9 0)(-2.9 0)(-2.8 0.2) \clvec(-2.7 0.4)(-2.7 0.4)(-2.6 0.2) 
\clvec(-2.5 0)(-2.5 0)(-2.4 0.2) \clvec(-2.3 0.4)(-2.3 0.4)(-2.2 0.2) 
\clvec(-2.1 0)(-2.1 0)(-2.0 0.2) \clvec(-1.9 0.4)(-1.9 0.4)(-1.8 0.2) 
\clvec(-1.7 0)(-1.7 0)(-1.6 0.2) \clvec(-1.5 0.4)(-1.5 0.4)(-1.4 0.2) 
\clvec(-1.3 0)(-1.3 0)(-1.2 0.2) \clvec(-1.1 0.4)(-1.1 0.4)(-1.0 0.2) 
\clvec(-0.9 0)(-0.9 0)(-0.8 0.2) \clvec(-0.7 0.4)(-0.7 0.4)(-0.6 0.2) 
\clvec(-0.5 0)(-0.5 0)(-0.4 0.2) \clvec(-0.3 0.4)(-0.3 0.4)(-0.2 0.2) 
\clvec(-0.1 0)(-0.1 0)(-0.0 0.2) \clvec( 0.1 0.4)( 0.1 0.4)( 0.2 0.2) 
\clvec( 0.3 0)( 0.3 0)( 0.4 0.2) \clvec( 0.5 0.4)( 0.5 0.4)( 0.6 0.2) 
\clvec( 0.7 0)( 0.7 0)( 0.8 0.2) \clvec( 0.9 0.4)( 0.9 0.4)( 1.0 0.2) 
\clvec( 1.1 0)( 1.1 0)( 1.2 0.2) \clvec( 1.3 0.4)( 1.3 0.4)( 1.4 0.2) 
\clvec( 1.5 0)( 1.5 0)( 1.6 0.2) \clvec( 1.7 0.4)( 1.7 0.4)( 1.8 0.2) 
\clvec( 1.9 0)( 1.9 0)( 2.0 0.2) \clvec( 2.1 0.4)( 2.1 0.4)( 2.2 0.2) 
\clvec( 2.3 0)( 2.3 0)( 2.4 0.2) \clvec( 2.5 0.4)( 2.5 0.4)( 2.6 0.2) 
\clvec( 2.7 0)( 2.7 0)( 2.8 0.2) \clvec( 2.9 0.4)( 2.9 0.4)( 3.0 0.2) 
\clvec( 3.1 0)( 3.1 0)( 3.2 0.2) \clvec( 3.3 0.4)( 3.3 0.4)( 3.4 0.2) 
\clvec( 3.5 0)( 3.5 0)( 3.6 0.2) \clvec( 3.7 0.4)( 3.7 0.4)( 3.8 0.2) 
\clvec( 3.9 0)( 3.9 0)( 4.0 0.2) \clvec( 4.1 0.4)( 4.1 0.4)( 4.2 0.2) 
\clvec( 4.3 0)( 4.3 0)( 4.4 0.2) \clvec( 4.5 0.4)( 4.5 0.4)( 4.6 0.2) 
\clvec( 4.7 0)( 4.7 0)( 4.8 0.2) \clvec( 4.9 0.4)( 4.9 0.4)( 5.0 0.2) 
\lvec(5 2.9)\lvec(-5 2.9) \lvec(-5 0.2)
\lfill f:0.95
\move(-5 3) \lvec(-5 2.5) \lvec(5 2.5) \lvec(5 3)
\lfill f:0.75
\move(-5 -0.5) \lvec(-5 0.2) 
\clvec(-4.9 0)(-4.9 0)(-4.8 0.2) \clvec(-4.7 0.4)(-4.7 0.4)(-4.6 0.2) 
\clvec(-4.5 0)(-4.5 0)(-4.4 0.2) \clvec(-4.3 0.4)(-4.3 0.4)(-4.2 0.2) 
\clvec(-4.1 0)(-4.1 0)(-4.0 0.2) \clvec(-3.9 0.4)(-3.9 0.4)(-3.8 0.2) 
\clvec(-3.7 0)(-3.7 0)(-3.6 0.2) \clvec(-3.5 0.4)(-3.5 0.4)(-3.4 0.2) 
\clvec(-3.3 0)(-3.3 0)(-3.2 0.2) \clvec(-3.1 0.4)(-3.1 0.4)(-3.0 0.2) 
\clvec(-2.9 0)(-2.9 0)(-2.8 0.2) \clvec(-2.7 0.4)(-2.7 0.4)(-2.6 0.2) 
\clvec(-2.5 0)(-2.5 0)(-2.4 0.2) \clvec(-2.3 0.4)(-2.3 0.4)(-2.2 0.2) 
\clvec(-2.1 0)(-2.1 0)(-2.0 0.2) \clvec(-1.9 0.4)(-1.9 0.4)(-1.8 0.2) 
\clvec(-1.7 0)(-1.7 0)(-1.6 0.2) \clvec(-1.5 0.4)(-1.5 0.4)(-1.4 0.2) 
\clvec(-1.3 0)(-1.3 0)(-1.2 0.2) \clvec(-1.1 0.4)(-1.1 0.4)(-1.0 0.2) 
\clvec(-0.9 0)(-0.9 0)(-0.8 0.2) \clvec(-0.7 0.4)(-0.7 0.4)(-0.6 0.2) 
\clvec(-0.5 0)(-0.5 0)(-0.4 0.2) \clvec(-0.3 0.4)(-0.3 0.4)(-0.2 0.2) 
\clvec(-0.1 0)(-0.1 0)(-0.0 0.2) \clvec( 0.1 0.4)( 0.1 0.4)( 0.2 0.2) 
\clvec( 0.3 0)( 0.3 0)( 0.4 0.2) \clvec( 0.5 0.4)( 0.5 0.4)( 0.6 0.2) 
\clvec( 0.7 0)( 0.7 0)( 0.8 0.2) \clvec( 0.9 0.4)( 0.9 0.4)( 1.0 0.2) 
\clvec( 1.1 0)( 1.1 0)( 1.2 0.2) \clvec( 1.3 0.4)( 1.3 0.4)( 1.4 0.2) 
\clvec( 1.5 0)( 1.5 0)( 1.6 0.2) \clvec( 1.7 0.4)( 1.7 0.4)( 1.8 0.2) 
\clvec( 1.9 0)( 1.9 0)( 2.0 0.2) \clvec( 2.1 0.4)( 2.1 0.4)( 2.2 0.2) 
\clvec( 2.3 0)( 2.3 0)( 2.4 0.2) \clvec( 2.5 0.4)( 2.5 0.4)( 2.6 0.2) 
\clvec( 2.7 0)( 2.7 0)( 2.8 0.2) \clvec( 2.9 0.4)( 2.9 0.4)( 3.0 0.2) 
\clvec( 3.1 0)( 3.1 0)( 3.2 0.2) \clvec( 3.3 0.4)( 3.3 0.4)( 3.4 0.2) 
\clvec( 3.5 0)( 3.5 0)( 3.6 0.2) \clvec( 3.7 0.4)( 3.7 0.4)( 3.8 0.2) 
\clvec( 3.9 0)( 3.9 0)( 4.0 0.2) \clvec( 4.1 0.4)( 4.1 0.4)( 4.2 0.2) 
\clvec( 4.3 0)( 4.3 0)( 4.4 0.2) \clvec( 4.5 0.4)( 4.5 0.4)( 4.6 0.2) 
\clvec( 4.7 0)( 4.7 0)( 4.8 0.2) \clvec( 4.9 0.4)( 4.9 0.4)( 5.0 0.2) 
\lvec(5 -0.5)
\lfill f:0.75
\move(1.8 1)\arrowheadtype t:T \avec(1.8 2.5)
\move(1.8 1)\arrowheadtype t:T \avec(1.8 0.5)
%
\move(1.9 1.5) \textref h:L v:C \htext{$1$}
\move(-4.8 -0.85) \textref h:C v:T \htext{$\varepsilon$}
\move(-5.25 0.2) \textref h:R v:C \htext{$\varepsilon$}

\move(6 2.9) \textref h:L v:B \htext{$\gamma^+$} \move(5.9 2.9) \clvec(5 2.9)(4 2.9)( 3 2.5)
\move(6 -0.4) \textref h:L v:T \htext{$\gamma^-_{\varepsilon}$} \move(5.9 -0.4) \clvec(5 -0.4)(4 -0.4)( 3.15 0.1)

\move(-5 0.2) 
\clvec(-4.9 0)(-4.9 0)(-4.8 0.2) \clvec(-4.7 0.4)(-4.7 0.4)(-4.6 0.2) 
\clvec(-4.5 0)(-4.5 0)(-4.4 0.2) \clvec(-4.3 0.4)(-4.3 0.4)(-4.2 0.2) 
\clvec(-4.1 0)(-4.1 0)(-4.0 0.2) \clvec(-3.9 0.4)(-3.9 0.4)(-3.8 0.2) 
\clvec(-3.7 0)(-3.7 0)(-3.6 0.2) \clvec(-3.5 0.4)(-3.5 0.4)(-3.4 0.2) 
\clvec(-3.3 0)(-3.3 0)(-3.2 0.2) \clvec(-3.1 0.4)(-3.1 0.4)(-3.0 0.2) 
\clvec(-2.9 0)(-2.9 0)(-2.8 0.2) \clvec(-2.7 0.4)(-2.7 0.4)(-2.6 0.2) 
\clvec(-2.5 0)(-2.5 0)(-2.4 0.2) \clvec(-2.3 0.4)(-2.3 0.4)(-2.2 0.2) 
\clvec(-2.1 0)(-2.1 0)(-2.0 0.2) \clvec(-1.9 0.4)(-1.9 0.4)(-1.8 0.2) 
\clvec(-1.7 0)(-1.7 0)(-1.6 0.2) \clvec(-1.5 0.4)(-1.5 0.4)(-1.4 0.2) 
\clvec(-1.3 0)(-1.3 0)(-1.2 0.2) \clvec(-1.1 0.4)(-1.1 0.4)(-1.0 0.2) 
\clvec(-0.9 0)(-0.9 0)(-0.8 0.2) \clvec(-0.7 0.4)(-0.7 0.4)(-0.6 0.2) 
\clvec(-0.5 0)(-0.5 0)(-0.4 0.2) \clvec(-0.3 0.4)(-0.3 0.4)(-0.2 0.2) 
\clvec(-0.1 0)(-0.1 0)(-0.0 0.2) \clvec( 0.1 0.4)( 0.1 0.4)( 0.2 0.2) 
\clvec( 0.3 0)( 0.3 0)( 0.4 0.2) \clvec( 0.5 0.4)( 0.5 0.4)( 0.6 0.2) 
\clvec( 0.7 0)( 0.7 0)( 0.8 0.2) \clvec( 0.9 0.4)( 0.9 0.4)( 1.0 0.2) 
\clvec( 1.1 0)( 1.1 0)( 1.2 0.2) \clvec( 1.3 0.4)( 1.3 0.4)( 1.4 0.2) 
\clvec( 1.5 0)( 1.5 0)( 1.6 0.2) \clvec( 1.7 0.4)( 1.7 0.4)( 1.8 0.2) 
\clvec( 1.9 0)( 1.9 0)( 2.0 0.2) \clvec( 2.1 0.4)( 2.1 0.4)( 2.2 0.2) 
\clvec( 2.3 0)( 2.3 0)( 2.4 0.2) \clvec( 2.5 0.4)( 2.5 0.4)( 2.6 0.2) 
\clvec( 2.7 0)( 2.7 0)( 2.8 0.2) \clvec( 2.9 0.4)( 2.9 0.4)( 3.0 0.2) 
\clvec( 3.1 0)( 3.1 0)( 3.2 0.2) \clvec( 3.3 0.4)( 3.3 0.4)( 3.4 0.2) 
\clvec( 3.5 0)( 3.5 0)( 3.6 0.2) \clvec( 3.7 0.4)( 3.7 0.4)( 3.8 0.2) 
\clvec( 3.9 0)( 3.9 0)( 4.0 0.2) \clvec( 4.1 0.4)( 4.1 0.4)( 4.2 0.2) 
\clvec( 4.3 0)( 4.3 0)( 4.4 0.2) \clvec( 4.5 0.4)( 4.5 0.4)( 4.6 0.2) 
\clvec( 4.7 0)( 4.7 0)( 4.8 0.2) \clvec( 4.9 0.4)( 4.9 0.4)( 5.0 0.2) 
\lpatt(0.1 0.1) \move(-5 0.5) \lvec(5 0.5)
\lpatt(0.05 0.05) 
\move(-5.0   0.50) \lvec(-5.25  0.50) 
\move(-4.6  -0.15) \lvec(-5.25 -0.15)
\move(-5.0 -0.5) \lvec(-5.0 -0.85) 
\move(-4.6  0.2) \lvec(-4.6 -0.85)
\lpatt(0.1 0)
\move(-5.15 -0.25)\arrowheadtype t:F \avec(-5.15 -0.15)
\move(-5.15  0.60)\arrowheadtype t:F \avec(-5.15  0.50)
\move(-5.15  0.50)\lvec(-5.15 -0.15)
\move(-5.10 -0.75)\arrowheadtype t:F \avec(-5.00 -0.75)
\move(-4.50 -0.75)\arrowheadtype t:F \avec(-4.60 -0.75)
\move(-5.00 -0.75)\lvec(-4.60 -0.75)
\drawdim cm \linewd 0.05
\lpatt(0.1 0) 
\move(-5 0.2) 
\clvec(-4.9 0)(-4.9 0)(-4.8 0.2) \clvec(-4.7 0.4)(-4.7 0.4)(-4.6 0.2) 
\clvec(-4.5 0)(-4.5 0)(-4.4 0.2) \clvec(-4.3 0.4)(-4.3 0.4)(-4.2 0.2) 
\clvec(-4.1 0)(-4.1 0)(-4.0 0.2) \clvec(-3.9 0.4)(-3.9 0.4)(-3.8 0.2) 
\clvec(-3.7 0)(-3.7 0)(-3.6 0.2) \clvec(-3.5 0.4)(-3.5 0.4)(-3.4 0.2) 
\clvec(-3.3 0)(-3.3 0)(-3.2 0.2) \clvec(-3.1 0.4)(-3.1 0.4)(-3.0 0.2) 
\clvec(-2.9 0)(-2.9 0)(-2.8 0.2) \clvec(-2.7 0.4)(-2.7 0.4)(-2.6 0.2) 
\clvec(-2.5 0)(-2.5 0)(-2.4 0.2) \clvec(-2.3 0.4)(-2.3 0.4)(-2.2 0.2) 
\clvec(-2.1 0)(-2.1 0)(-2.0 0.2) \clvec(-1.9 0.4)(-1.9 0.4)(-1.8 0.2) 
\clvec(-1.7 0)(-1.7 0)(-1.6 0.2) \clvec(-1.5 0.4)(-1.5 0.4)(-1.4 0.2) 
\clvec(-1.3 0)(-1.3 0)(-1.2 0.2) \clvec(-1.1 0.4)(-1.1 0.4)(-1.0 0.2) 
\clvec(-0.9 0)(-0.9 0)(-0.8 0.2) \clvec(-0.7 0.4)(-0.7 0.4)(-0.6 0.2) 
\clvec(-0.5 0)(-0.5 0)(-0.4 0.2) \clvec(-0.3 0.4)(-0.3 0.4)(-0.2 0.2) 
\clvec(-0.1 0)(-0.1 0)(-0.0 0.2) \clvec( 0.1 0.4)( 0.1 0.4)( 0.2 0.2) 
\clvec( 0.3 0)( 0.3 0)( 0.4 0.2) \clvec( 0.5 0.4)( 0.5 0.4)( 0.6 0.2) 
\clvec( 0.7 0)( 0.7 0)( 0.8 0.2) \clvec( 0.9 0.4)( 0.9 0.4)( 1.0 0.2) 
\clvec( 1.1 0)( 1.1 0)( 1.2 0.2) \clvec( 1.3 0.4)( 1.3 0.4)( 1.4 0.2) 
\clvec( 1.5 0)( 1.5 0)( 1.6 0.2) \clvec( 1.7 0.4)( 1.7 0.4)( 1.8 0.2) 
\clvec( 1.9 0)( 1.9 0)( 2.0 0.2) \clvec( 2.1 0.4)( 2.1 0.4)( 2.2 0.2) 
\clvec( 2.3 0)( 2.3 0)( 2.4 0.2) \clvec( 2.5 0.4)( 2.5 0.4)( 2.6 0.2) 
\clvec( 2.7 0)( 2.7 0)( 2.8 0.2) \clvec( 2.9 0.4)( 2.9 0.4)( 3.0 0.2) 
\clvec( 3.1 0)( 3.1 0)( 3.2 0.2) \clvec( 3.3 0.4)( 3.3 0.4)( 3.4 0.2) 
\clvec( 3.5 0)( 3.5 0)( 3.6 0.2) \clvec( 3.7 0.4)( 3.7 0.4)( 3.8 0.2) 
\clvec( 3.9 0)( 3.9 0)( 4.0 0.2) \clvec( 4.1 0.4)( 4.1 0.4)( 4.2 0.2) 
\clvec( 4.3 0)( 4.3 0)( 4.4 0.2) \clvec( 4.5 0.4)( 4.5 0.4)( 4.6 0.2) 
\clvec( 4.7 0)( 4.7 0)( 4.8 0.2) \clvec( 4.9 0.4)( 4.9 0.4)( 5.0 0.2) 
\move(-5 2.5) \lvec(5 2.5)
}\caption{Channel with oscillating boundary.}\label{fig:roughchannel}
\end{figure}

Then we consider the following set of equations:
\begin{equation}\label{problem0}
\left\{
\begin{array}{rcll}
\Re \, ( u \cdot \nabla u ) - (1-r)\Delta u + \nabla p &=& \div \, \sigma + f, \qquad & \textnormal{in~$\omega_\varepsilon$,}\\
\div\, u &=& 0,& \textnormal{in~$\omega_\varepsilon$,}\\
\We \, ( u \cdot \nabla \sigma + g_{a}(\nabla u , \sigma) ) + \sigma - \Di \, \Delta \sigma &=& 2r \D(u),& \textnormal{in~$\omega_\varepsilon$,}\\
u &=& 0,& \textnormal{on~$\gamma_\varepsilon$,}\\
\Di \, \partial_n \sigma &=& 0,& \textnormal{on~$\gamma_\varepsilon$.}
\end{array}
\right.
\end{equation}
This system is the non-dimensional version of the system~\eqref{equation_base1}--\eqref{equation_base3}. We have introduced the Reynolds number~$\Re$, the Weissenberg number~$\We$, a relaxation parameter~$r\in [0,1]$ and the diffusion coefficient~$\Di$.

In the next sections, we aim at describing the structure of the solution~$(u,p)$ with respect to the roughness number~$\varepsilon$. Before entering into this description, let us recall the main mathematical results related to the stationary diffusive Oldroyd model. The problem defined in a strong form can be associated to a variational formulation. Then we have (see \cite{CM14} for details):

\begin{theorem}\label{theorem}
Let $f\in H^{-1}(\omega_{\varepsilon})^3$. Let $\Re \geq 0$, $\We\geq 0$, $0< r<1$, $-1\leq a\leq 1$ and~$\Di>0$. Let us introduce the following constants:
$$
C_{(\mathrm{I})} := \frac{8|a| C_{\omega_\varepsilon}^2 \We \|f\|_{H^{-1}}}{\min(1-r,\Di)^2},
\quad
C_{(\mathrm{II})} := \frac{\sqrt{2r}\min(1-r,\Di)}{4|a| C_{\omega_\varepsilon}^2 \We} \left( 1 - \sqrt{1 - C_{(\mathrm{I})}} \right),
$$
where~$C_{\omega_{\varepsilon}}$ is a constant which only depends on the domain~$\omega_{\varepsilon}$.

\begin{itemize}
\item {\em Existence.} If $C_{(\mathrm{I})} \leq 1$ then, the problem~\eqref{problem0} admits a variational solution $(u, \sigma)$ which satisfies
$$2r \|\nabla u\|_{L^2}^2 + \|\sigma\|_{H^1}^2 \leq C_{(\mathrm{II})}^2.$$
Moreover there exists a pressure field~$p\in L^2(\Omega)$ such that~$(u,p)$ satisfies the first equation of the problem~\eqref{problem0} in the sense of distributions.
\item {\em Uniqueness.} The variational formulation of the problem~\eqref{problem0} admits at most one solution if {\em one} of the following conditions is satisfied:
\begin{enumerate}
\item[a)]~$\|f\|_{H^{-1}}$ is small enough;
\item[b)]~$\Re$ and~$\We$ are small enough.
\end{enumerate}
\item {\em Regularity.} If~$f$ is regular then the variational solution is regular and satisfies the problem~\eqref{problem0} in a classical sense.
\end{itemize}
\end{theorem}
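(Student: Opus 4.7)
The plan is to prove existence by a Galerkin (or Schauder fixed-point) argument grounded in a single clean energy estimate, and then to deduce uniqueness and regularity by standard linear bootstraps; the role of the diffusion $\Di>0$ will be essential throughout, since it is what promotes $\sigma$ to $H^{1}$ and thereby allows the quadratic nonlinearity $g_{a}$ to be controlled via a Sobolev embedding.

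First I would set up the variational formulation: look for $u$ in $V=\{v\in H^{1}_{0}(\omega_\varepsilon)^{3}:\,\div v=0\}$ and $\sigma$ in $H^{1}(\omega_\varepsilon)^{3\times 3}_{\mathrm{sym}}$, eliminate the pressure via De Rham's lemma after solving, and test the momentum equation against $u$ and the constitutive equation against $\sigma$. The convective terms $u\cdot\nabla u$ and $u\cdot\nabla\sigma$ vanish thanks to $\div u=0$ and the boundary conditions; the antisymmetric part $\sigma\cdot\mathbb{W}(u)-\mathbb{W}(u)\cdot\sigma$ is orthogonal to $\sigma$ because $\mathbb{W}(u)$ is skew; and the cross term $2r\int\sigma:\mathbb{D}(u)$ appearing in the constitutive test matches, up to sign, the coupling $\int\sigma:\nabla u$ produced by $\mathrm{div}\,\sigma$ in the momentum test when one multiplies the momentum identity by $2r$. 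What remains is
\[
2r(1-r)\|\nabla u\|_{L^{2}}^{2}+\|\sigma\|_{L^{2}}^{2}+\Di\,\|\nabla\sigma\|_{L^{2}}^{2}
\;=\;2r\langle f,u\rangle + a\,\We\!\int(\sigma\cdot\mathbb{D}(u)+\mathbb{D}(u)\cdot\sigma):\sigma,
\]
and the last term is bounded by $2|a|\We\,C_{\omega_\varepsilon}^{2}\|\sigma\|_{H^{1}}^{2}\|\nabla u\|_{L^{2}}$ via the embedding $H^{1}\hookrightarrow L^{4}$. Setting $X=\sqrt{2r}\|\nabla u\|_{L^{2}}$ and $Y=\|\sigma\|_{H^{1}}$, this is a quadratic inequality of the form $\min(1-r,\Di)(X^{2}+Y^{2})\le 2\sqrt{2r}\|f\|_{H^{-1}}X+(2|a|\We C_{\omega_\varepsilon}^{2}/\sqrt{2r})\,XY^{2}$, and a direct study shows that the condition $C_{(\mathrm{I})}\le 1$ is exactly what is needed for the relevant root to exist, yielding the a priori bound $X^{2}+Y^{2}\le C_{(\mathrm{II})}^{2}$.

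With this a priori bound in hand, existence follows by a standard Galerkin approximation (or by a fixed-point map $(u,\sigma)\mapsto(\tilde u,\tilde \sigma)$ where $\tilde u$ solves a linear Stokes system with right-hand side $f+\mathrm{div}\,\sigma-\Re\,u\cdot\nabla u$ and $\tilde\sigma$ solves a linear convection-diffusion equation for the extra stress) restricted to the ball of radius $C_{(\mathrm{II})}$: the ball is stable and the map is compact by Rellich, so Schauder applies. The pressure is recovered by De Rham. For uniqueness, I would take two solutions $(u_{1},\sigma_{1})$, $(u_{2},\sigma_{2})$, write the equations for the differences $(U,\Sigma)=(u_{1}-u_{2},\sigma_{1}-\sigma_{2})$, and test again against $(U,\Sigma)$; the same cancellations as above occur, but now the residual bilinear terms are estimated by the norms of the solutions themselves, hence are controlled either by $\|f\|_{H^{-1}}$ small (through $C_{(\mathrm{II})}$) or by $\Re,\We$ small, giving a contraction and zero difference. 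Regularity, finally, is obtained by viewing the momentum equation as a Stokes problem with right-hand side in $L^{2}$ (resp.\ higher) and the constitutive equation as an elliptic equation in $\sigma$ with Neumann data, then bootstrapping in the usual way.

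The crux of the argument, and the only place where the structure of the model is genuinely used, is the cancellation step leading to the quadratic inequality: it is imperative that the nonlinearities $u\cdot\nabla u$, $u\cdot\nabla\sigma$, and the skew part of $g_{a}$ all vanish in the energy balance, and that the coupling terms $\int\sigma:\nabla u$ appearing from $\mathrm{div}\,\sigma$ and from $2r\mathbb{D}(u)$ cancel exactly when the two tests are combined with the correct weight $2r$. The main obstacle is therefore controlling the non-canceling part $a(\sigma\cdot\mathbb{D}+\mathbb{D}\cdot\sigma):\sigma$, which is cubic and \emph{a priori} not bounded by the linear energy; this is precisely why $\Di>0$ is required (to obtain $\sigma\in H^{1}\hookrightarrow L^{4}$) and why the smallness condition $C_{(\mathrm{I})}\le 1$ appears.
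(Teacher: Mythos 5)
The paper does not actually prove this theorem; it states it and refers the reader to the companion work \cite{CM14}, so a line-by-line comparison is not possible. That said, your energy-method strategy is the natural one for the diffusive Oldroyd system and does reproduce the structure of the constants $C_{(\mathrm{I})}$ and $C_{(\mathrm{II})}$: weighting the momentum test by $2r$ so that the coupling $\int\sigma:\nabla u$ cancels against $2r\int\mathbb D(u):\sigma$, observing that the commutator part of $g_a$ is orthogonal to $\sigma$, and using $\Di>0$ to put $\sigma$ in $H^1\hookrightarrow L^4$ so that the remaining cubic term $a\int(\sigma\mathbb D+\mathbb D\sigma):\sigma$ is controlled — these are exactly the structural observations that produce a discriminant condition of the form $C_{(\mathrm{I})}\leq 1$ and a smaller-root bound $C_{(\mathrm{II})}$.

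Two points need to be stated more carefully. First, the inequality $\min(1-r,\Di)(X^2+Y^2)\leq \alpha X+\beta XY^2$ does not by itself yield $X^2+Y^2\leq C_{(\mathrm{II})}^2$: with $Z=\sqrt{X^2+Y^2}$ the inequality allows $Z\in[0,Z_-]\cup[Z_+,\infty)$, so the a priori bound is genuinely a consequence of the ball-invariance (or Galerkin/continuation) argument you invoke afterwards, not of the energy inequality alone; the phrase ``yielding the a priori bound'' overstates what the inequality gives. Second, the fixed-point map you sketch — a decoupled linear Stokes solve with $f+\div\sigma-\Re\,u\cdot\nabla u$ on the right-hand side — would destroy precisely the cancellations the estimate depends on: testing that linear problem against $\widetilde u$ no longer annihilates the transport term (so a spurious dependence on $\Re$ would enter, whereas the theorem's constants are $\Re$-free), and with $\div\sigma$ rather than $\div\widetilde\sigma$ on the right the coupling terms no longer cancel. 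To preserve the structure one should freeze only the convecting field, i.e.\ linearize as $u\cdot\nabla\widetilde u$, $u\cdot\nabla\widetilde\sigma$ and $g_a(\nabla u,\widetilde\sigma)$, and solve the \emph{coupled} Stokes--diffusion system for $(\widetilde u,\widetilde\sigma)$, so that the energy identity for the linearized map has the same shape as the nonlinear one; the Galerkin route you also mention is cleaner here, since the exact nonlinear energy identity holds for the finite-dimensional approximants. With those corrections your plan is sound and matches what the cited result asserts.
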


Let us remark that the corotationnal case (namely $a=0$) allows us to get rid of the smallness assumption on the data.

\section{Asymptotic expansion}\label{Section3}

\subsection{Main ideas: ansatz}\label{sec:2:1}

Let us describe the structure of the solution by using a suitable {\em ansatz}: 
$$
\left\{
\begin{aligned}
u(x,y) & = u_0(x,y) + \varepsilon U_1\Big(x,\frac{x}{\varepsilon},\frac{y}{\varepsilon}\Big) + \varepsilon u_1(x,y) +\varepsilon^2 U_2\Big(x,\frac{x}{\varepsilon},\frac{y}{\varepsilon}\Big) + \cdots \\
& = \displaystyle\sum_{k=0}^{+\infty} \varepsilon^k \left( u_k(x,y) + U_k\Big(x,\frac{x}{\varepsilon},\frac{y}{\varepsilon}\Big) \right), \\
p(x,y) & = p_0(x,y) + P_1\Big(x,\frac{x}{\varepsilon},\frac{y}{\varepsilon}\Big) + \varepsilon p_1(x,y) + \varepsilon P_2\Big(x,\frac{x}{\varepsilon},\frac{y}{\varepsilon}\Big) +\cdots\\ 
& = \displaystyle\sum_{k=0}^{+\infty} \varepsilon^k \left( p_k(x,y) + P_{k+1}\Big(x,\frac{x}{\varepsilon},\frac{y}{\varepsilon}\Big) \right), \\
\sigma(x,y) & = \sigma_0(x,y) + \varepsilon \Sigma_1\Big(x,\frac{x}{\varepsilon},\frac{y}{\varepsilon}\Big) + \varepsilon \sigma_1(x,y) + \varepsilon^2 \Sigma_2\Big(x,\frac{x}{\varepsilon},\frac{y}{\varepsilon}\Big) + \cdots\\ 
& = \displaystyle \sum_{k=0}^{+\infty} \varepsilon^k \left( \sigma_k(x,y) + \Sigma_k\Big(x,\frac{x}{\varepsilon},\frac{y}{\varepsilon}\Big) \right).
\end{aligned}
\right.
$$
The definition of the asymptotic expansion has to be completed by the description of the problems satisfied by the {\em elementary solutions}. Then another task consists in showing that each elementary problem is well posed. The final task consists in showing that the level of truncation in the asymptotic expansion is directly related to the quality of the approximation of the exact solution. 

In order to identify the {\em elementary problems} satisfied by the {\em elementary solutions}, we proceed as follows:
\begin{enumerate}
\item separation of the macroscopic variables $(x,y)$ and the microscopic ones which will be denoted $(X,Y)=(\frac{x}{\varepsilon},\frac{y}{\varepsilon})$~; 
\item identification of terms with the same order with respect to~$\varepsilon$ in the equations~;
\item identification of terms with the same order with respect to~$\varepsilon$ in the boundary conditions.
\end{enumerate}

\begin{notation}
We will use lowercase letters to denote elements corresponding to the real physical domain ($x$, $u$, $\sigma$, $\omega$...) and uppercase letters for all that concerns the microscopic field ($X$, $U$, $\Sigma$, $\Omega$...).
\end{notation}

To complete this subsection, and before getting into the details, let us explain the way the asymptotic expansion has been built: at main order, the fluid flow is governed by a classical viscoelastic model with boundary conditions located at the flat bottom $y=0$ (definition of $(u_0,p_0,\sigma_0)$). But, of course, the boundary layer has been omitted and, in fact, the boundary condition should have been imposed on the oscillating boundary instead of the smooth one~; it can be shown that the resulting error is of order~$\varepsilon$ (the size of the boundary layer) and, therefore, a so-called {\em boundary layer correction} is introduced in order to counterbalance the mentioned boundary value default (definition of $(U_{1},P_{1},\Sigma_{1})$). Now if we analyze the approximate solution defined as $(u_0+\varepsilon U_1,p_0+ P_1,\sigma_0+\varepsilon \Sigma_1)$, the equations in the domain and the boundary condition at the oscillating boundary are satisfied by means of construction~; unfortunately, the boundary condition on the upper boundary is not satisfied by the approximate solution because of the behavior of the boundary layer corrector at infinity. However the error is of order~$\varepsilon$ and it is is possible to build a solution on the smooth domain (definition of $(u_{1},p_{1},\sigma_{1})$) which counterbalances this boundary value default on the upper boundary. Again, at his step, by considering the smooth domain only (in particular, homogeneous boundary conditions are considered at the lower {\em smooth} boundary), the boundary layer has been omitted and, in fact, the boundary condition should have been imposed on the oscillating boundary~; the resulting error is {\em now} of order~$\varepsilon^2$. Thus, if we compare $(u_\varepsilon,p_\varepsilon,\sigma_\varepsilon)-(u_0,p_0,\sigma_0)$ and $(u_\varepsilon,p_\varepsilon,\sigma_\varepsilon)-(u_0+\varepsilon U_1+\varepsilon u_1,p_0+ P_1+\varepsilon p_1,\sigma_0+\varepsilon \Sigma_1+\varepsilon \sigma_1)$, the error has been decreased by an order of magnitude and, besides, the same procedure can be applied by introducing a boundary layer correction which counterbalances the boundary value default of order~$\varepsilon^2$ at the oscillating boundary.

In a more general way, the boundary value default introduced at the oscillating boundary can be counterbalanced by a boundary layer correction~; the resulting boundary value default at the upper boundary can be counterbalanced by a viscoelastic flow defined in the {\em smooth} domain. Through this procedure, the resulting approximation satisfies the equations in the domain, the boundary condition at the upper boundary and the error on the boundary condition on the oscillating boundary has been decreased by an order of magnitude.

Let us define the two rescaled sub-domains. As a matter of fact, the main flow is defined on the smooth domain whereas, due to the consideration of the roughness patterns, the boundary layer is rescaled by the homothetic transformation $(X,Y):=(\frac{x}{\varepsilon},\frac{y}{\varepsilon})$.

\begin{definition}
The {\em smooth} domain is defined by 
$$
\omega_0:=\big\{(x,y) \in \T^d \times \R, \quad 0 < y < 1 \big\}.
$$
The upper boundary equals~$\gamma^+$ and the lower boundary corresponds to~$\gamma^-_0$. The normal outward unit vector the lower (resp. upper) boundary is $n=(0,-1)$ (resp. $(0,1)$) on~$\gamma^-_0$ (resp. $\gamma^+$).
\end{definition}

\begin{definition}
The {\em boundary layer} domain is defined by 
$$
\Omega=\big\{(X,Y)\in \T^d\times \R, \quad -H(X) < Y \big\}.
$$
The boundary $\{(X,Y) \in \T^d \times \R, \quad Y = -H(X) \}$ is denoted~$\Gamma$. We denote by $N:= -(\nabla H, 1)$ the outward vector to the lower boundary. Note that~$N$ is not a unit vector.
\end{definition}

\begin{notation}
The usual notation for classical operator of derivation are~$\nabla$, $\div$ and~$\Delta$.
The problems considered in this paper make appear two kinds of functions: the first ones, like the velocity~$u$, which only depend on the macroscopic variables $(x,y)$, and the others, like the velocity~$U$ which depend on $(x,X,Y)$.
In the first case the classical operators are defined as usual, for instance
$$
\Delta u = \sum_{\ell=1}^d \partial_{x_\ell}^2 u + \partial_{y}^2 u.
$$
In the case of function depending on $(x,X,Y)$, the notations are the following:
$$
\Delta U = \sum_{\ell=1}^d \partial_{X_\ell}^2 U + \partial_Y^2 U
\quad \text{and} \quad
\Delta_x U = \sum_{\ell=1}^d \partial_{x_\ell}^2 U.
$$
Notice that the divergence with respect to the variable~$x$ is also defined for a function $U:\R^{d+1} \to \R^{d+1}$ by
$$
\div_x U = \sum_{\ell=1}^d \partial_{x_\ell} U^{(\ell)}.
$$
The normal derivative of a function~$U$ defined on~$\Gamma$ is defined by 
$$
\partial_N U = (N \cdot \nabla) U = - \sum_{\ell=1}^d \partial_{x_\ell} H \, \partial_{x_\ell} U - \partial_Y U.
$$
\end{notation}

\subsection{Elementary problems}

Let us define the problem at the main scale: it consists in considering the viscoelastic problem on the smooth domain, i.e. by truncating the rough boundary from the initial domain, associated to homogeneous conditions on both boundaries.\medskip

\noindent $\blacksquare$ {\bf Main order:}
$$
\mathrm{pb}^{(0)}\left\{
\begin{array}{rcll}
\Re \, ( u_0\cdot \nabla u_0 ) - (1-r)\Delta u_0 + \nabla p_0 - \div(\sigma_0) &=& f_0 + a_0,& \text{on~$\omega_0$}\\[0.2cm]
\div (u_0) &=& 0, & \text{on~$\omega_0$}\\[0.2cm]
\We \, ( u_0 \cdot \nabla \sigma_0 + g_a(\nabla u_0, \sigma_0) ) + \sigma_0 - \Di \Delta \sigma_0 &=& 2r \D(u_0), & \text{on~$\omega_0$}\\[0.2cm]
u_0&=&0, & \text{on~$\gamma^-_0\cup\gamma^+$}\\[0.2cm]
\partial_{n}\sigma_0&=&0, & \text{on~$\gamma^+$}\\[0.2cm]
\partial_{n}\sigma_0&=&b_0, & \text{on~$\gamma^-_0$}\\[0.2cm] 
\end{array}
\right.
$$
where $f_0=f$ is the source term coming from the modelisation, and where the constants~$a_{0}$ and~$b_{0}$ have to be fixed.\medskip

\begin{remark} The choice of constants~$a_{0}$ and~$b_{0}$ will be discussed further. Roughly speaking, they play the role of a degree of freedom which will be fixed in order to ensure the well-posedness of elementary problems and a suitable behavior of elementary solutions to be defined. From a practical point of view, we will set $a_{0}=b_{0}=0$.
\end{remark}

Imposing the homogeneous Dirichlet condition on the smooth lower boundary~$\gamma^-_0$ instead of the oscillating one~$\gamma^-_\varepsilon$ is a source of error. Indeed, $(u_0,p_0,\sigma_0)$ satisfies all the equations except the boundary condition at the oscillating boundary~$\gamma^-_\varepsilon$. The value at the oscillating boundary can be determined by a Taylor expansion:
$$\begin{array}{rcl}
u_{0}(x,-\varepsilon H(\frac x \varepsilon))&=& \displaystyle\sum_{k=0}^{+\infty} \displaystyle\frac{\left(-\varepsilon H(\frac x \varepsilon)\right)^k}{k!} \partial_{y}^k u_{0}(x,0).
\end{array}$$
Since $u_{0}(x,0)=0$, one can check that $u_{0}|_{\gamma_{\varepsilon}^-}$ is of order~$\varepsilon$, namely
$$\begin{array}{rcl}
u_{0}(x,-\varepsilon H(\frac x \varepsilon)) & = & -\varepsilon H(\frac x \varepsilon) \partial_{y} u_{0}(x,0) + \mathcal O(\varepsilon^2),
\end{array}$$
and this leading term of order~$\varepsilon$ will be counterbalanced by first the boundary layer corrector. 

\begin{remark} Note that the other terms (of orders~$\varepsilon^2$, $\varepsilon^3$ etc.) will be treated and counterbalanced in subsequent boundary layer problems.
\end{remark}

\begin{remark} The same methodology applies for the Neumann condition related to the elastic tensor.
\end{remark}

As a consequence of the previous remarks, we define the following boundary layer problem.\medskip

\noindent $\blacksquare$ {\bf Correction with boundary layer n$^{\mathrm o}$ 1:}
$$
\mathrm{PB}^{(1)}\left\{
\begin{array}{rclr}
 - (1-r)\Delta U_1 + \nabla P_1 &=& 0, & \text{on~$\Omega$}\\[0.2cm]
 \div (U_1) &=& 0, & \text{on~$\Omega$}\\[0.2cm]
 U_1 &=& H \partial_y u_0|_{\gamma^-_\varepsilon}, & \text{on~$\Gamma$}\\[0.4cm]
-\Di\, \Delta \Sigma_1 &=& 0, & \text{on~$\Omega$}\\[0.2cm]
\partial_N \Sigma_1 &=& (\nabla H \cdot \nabla_x) \sigma_0|_{\gamma^-_\varepsilon} - b_0 & \text{on~$\Gamma$}.
\end{array}
\right.
$$

\begin{remark} Let us present the behavior of the solution (the proofs will be given by proposition~\ref{prop:blp1} later).
\begin{enumerate}
\item When $Y\rightarrow +\infty$, the velocity~$U_1$ exponentially decreases towards the constant defined as
$$U_\infty:=\displaystyle \lim_{Y\to \infty} \int_{\T^d} U_1(\cdot,Y).$$
\item For $b_0=0$, the elastic tensor~$\Sigma_1$ exponentially decreases towards~$0$.
\end{enumerate}
\end{remark}

At this stage, it can be shown that the approximation satisfies the equations in the domain and at the oscillating boundary. At the upper boundary, the Neumann boundary condition for the elastic tensor is satisfied because of the exponential decay of~$\sigma_0$ at infinity. However the homogeneous Dirichlet condition is not satisfied for the velocity field. This is why a so-called main corrector defined on the smooth domain is defined in order to counterbalance the boundary default introduced by the approximation at the upper boundary.

Defining the linear operators~$\mathcal L^{(A)}$, $\mathcal L^{(B)}$ and~$\mathcal L^{(C)}$ by 
$$\begin{array}{lcl}
\mathcal L^{(A)}(u_1)&:=& \Re \, ( u_1\cdot \nabla u_0 + u_0\cdot \nabla u_1 ),\\
\mathcal L^{(B)}(u_1)&:=& \We \,(u_1 \cdot \nabla \sigma_0 + g_a(\nabla u_1 , \sigma_0)),\\
\mathcal L^{(C)}(\sigma_1)&:=&\We \,(u_0 \cdot \nabla \sigma_1 + g_a(\nabla u_0 , \sigma_1)),
\end{array}$$
we define the following problems:\medskip

\noindent $\blacksquare$ {\bf Main correction n$^{\mathrm o}$ 1:} 
$$
\mathrm{pb}^{(1)}\left\{
\begin{array}{rclr}
\mathcal L^{(A)}(u_1) - (1-r)\Delta u_1 +\nabla p_1 &=& \div (\sigma_1) + a_1, & \text{on~$\omega_0$}\\[0.2cm]
\div (u_1) &=& 0, & \text{on~$\omega_0$}\\[0.2cm]
\mathcal L^{(B)}(u_1)+\mathcal L^{(C)}(\sigma_1) + \sigma_1 - \Di \Delta \sigma_1 &=& 2r \D(u_1), & \text{on~$\omega_0$}\\[0.2cm]
u_1 &=& - \displaystyle \lim_{Y\to \infty} \int_\T U_1, & \text{on~$\gamma^+$}\\[0.2cm] 
u_1 &=& 0, & \text{on~$\gamma^-_0$}\\[0.2cm] 
\partial_n \sigma_1 &=& 0, & \text{on~$\gamma^+$}\\[0.2cm]
\partial_n \sigma_1 &=& b_1, & \text{on~$\gamma^-_0$}.\\[0.2cm] 
\end{array}
\right.
$$

\noindent $\blacksquare$ {\bf Correction with boundary layer n$^{\mathrm o}$ 2:}
$$
\mathrm{PB}^{(2)}\left\{
\begin{array}{rclr}
- (1-r)\Delta U_2 +\nabla P_2 &=& F_2 - a_0, & \text{on~$\Omega$}\\[0.2cm]
 \div (U_2) &=& 0, & \text{on~$\Omega$}\\[0.2cm]
  U_2 &=& H \partial_y u_1|_{\gamma^-_0} - \frac{1}{2}H^2\partial_y^2 u_0|_{\gamma^-_0} & \text{on~$\Gamma$}\\[0.4cm]
-\Di\, \Delta \Sigma_2 &=& G_2, & \text{on~$\Omega$}\\[0.2cm]
 \partial_N \Sigma_2 &=& - H \partial_y^2\sigma_0|_{\gamma^-_0} + (\nabla H \cdot \nabla_x) \sigma_1|_{\gamma^-_0} & \\
&&\hspace{0.5cm}-H (\nabla H \cdot \nabla_x) \partial_y \sigma_0|_{\gamma^-_0} \\
&&\hspace{0.5cm}+ (\nabla H \cdot \nabla_x) \Sigma_1 - b_1 & \text{on~$\Gamma$}.
\end{array}
\right.
$$
with
$$\begin{array}{rcl}
F_2 &:=& \div (\Sigma_1) - \nabla_x P_1 + 2(1-r)\nabla_x \cdot \nabla U_1 - \Re ( u_0 \cdot \nabla U_1 ), \\
G_2 &:=& 2r \D(U_1) + 2\Di\, \nabla_x \cdot \nabla \Sigma_1 - \We \left( u_0 \cdot \nabla \Sigma_1 + g_a(\nabla U_1 , \sigma_0) \right).
\end{array}
$$

\begin{remark} With a suitable choice of~$a_{0}$ (resp.~$b_{1}$), the problem related to the velocity field (resp. constraint field) is well-posed and satisfies  the property with exponential decay towards a constant (resp.~$0$). Moreover,~$b_1$ does not depend on~$\sigma_1$!
\end{remark}

\noindent $\blacksquare$ {\bf Main correction at order $k\geq 2$:} 
$$
\mathrm{pb}^{(k)}\left\{
\begin{array}{rclr}
\mathcal L^{(A)}(u_k) - (1-r)\Delta u_k + \nabla p_k &=& \div (\sigma_k) + f_k + a_k & \text{on~$\omega_0$,} \\[0.2cm]
\div (u_k) &=& 0 & \text{on~$\omega_0$,} \\[0.2cm]
\mathcal L^{(B)}(u_k) + \mathcal L^{(C)}(\sigma_k) + \sigma_k - \Di \Delta \sigma_k &=& 2r \D(u_k) + g_k & \text{on~$\omega_0$,} \\[0.2cm]
u_k &=& -\lim_{Y\to \infty} \int_\T U_k & \text{on~$\gamma^+$,}\\[0.2cm]
u_k &=& 0 & \text{on~$\gamma^-_0$,}\\[0.2cm] 
\partial_n \sigma_k &=& 0 & \text{on~$\gamma^+$,}\\[0.2cm]
\partial_n \sigma_k &=& b_k & \text{on~$\gamma^-_0$}.
\end{array}
\right.
$$
where~$f_k$ and~$g_k$ only depend on solutions that were defined previously:
$$\begin{array}{ccl}
f_k &:=& -\Re\displaystyle \sum_{i=1}^{k-1} u_i \cdot \nabla u_{k-i},\\
g_k &:=& \We\displaystyle \sum_{i=1}^{k-1} (u_i \cdot \nabla \sigma_{k-i} + g(\nabla u_i,\sigma_{k-i})).
\end{array}$$

Higher order correction terms of the asymptotic expansion are defined as the solutions of the following elementary problems.\medskip

\noindent $\blacksquare$ {\bf Correction with boundary layer of order $k\geq 3$:}
$$
\mathrm{PB}^{(k)}\left\{
\begin{array}{rclr}
-(1-r)\Delta U_k + \nabla P_k &=& F_k - a_{k-2} & \text{on~$\Omega$,}\\[0.2cm]
\div (U_k) &=& 0 & \text{on~$\Omega$,}\\[0.2cm]
U_k &=& D_k & \text{on~$\Gamma$,}\\[0.4cm]
-\Di\, \Delta \Sigma_k &=& G_k & \text{on~$\Omega$,}\\[0.2cm]
\partial_N \Sigma_k &=& N_k - b_{k-1} & \text{on~$\Gamma$,}
\end{array}
\right.
$$
where~$F_k$, $D_k$, $G_k$ and~$N_k$ only depend on solutions that were defined previously.
$$
\begin{array}{ccl}
F_k &:=& \div (\Sigma_{k-1}) + 2(1-r) \nabla_x \cdot \nabla U_{k-1} \\[0.2cm]
&& \quad + (1-r) \Delta_x U_{k-2} - \nabla_x P_{k-1} + \div_x (\Sigma_{k-2}) \\
&& \quad - \Re \displaystyle \sum_{i=0}^{k-2} \left( (u_i + U_i)\cdot (\nabla U_{k-1-i} + \nabla_x U_{k-2-i}) + U_i\cdot \nabla u_{k-i-2} \right), \\[0.2cm]
D_k &:=& -\displaystyle \sum_{p=1}^{k} \frac{(-1)^p}{p!} H^p \partial_y^{(p)} u_{k-p}|_b, \\
G_k &:=&\displaystyle 2r \D(U_{k-1})+ 2r \D_x(U_{k-2})-\Sigma_{k-2}+\displaystyle 2\Di\, \nabla_x \cdot \nabla \Sigma_{k-1} +\Di\, \Delta_x \Sigma_{k-2} \\
&&\begin{array}{l}
\begin{array}{ll}-\We\Bigg( & \displaystyle\sum_{i=0}^{k-2} u_i \cdot (\nabla \Sigma_{k-1-i} + \nabla_x \Sigma_{k-i-2})\\
&+ U_i \cdot (\nabla \sigma_{k-i-2} + \nabla \Sigma_{k-1-i} + \nabla_x \Sigma_{k-i-2})\\
&\displaystyle+ g_a(\nabla u_i , \Sigma_{k-i-2}) + g_a(\nabla U_{i+1} + \nabla_x U_i , \sigma_{k-i-2} + \Sigma_{k-i-2})\Bigg),\end{array}
\end{array}\\[0.4cm] 
N_k &:=& \displaystyle (\nabla H \cdot \nabla_x ) \sigma_{k-1}|_{\gamma^-_0} + (\nabla H \cdot \nabla_x) \Sigma_{k-1} \\
&& \hspace{1cm} \displaystyle + \sum_{p=1}^{k-1} \frac{(-1)^p}{p!} H^p \partial_y^{(p)} \left( (\nabla H \cdot \nabla_x) \sigma_{k-1-p} + \partial_y \sigma_{k-1-p} \right)|_{\gamma^-_0}.
 \end{array}$$

\section{Analysis of the elementary problems: well-posedness and properties of the solutions}\label{Section4}

The elementary problems related to the boundary layer correctors with respect to the velocity take the following form:
$$
\mathrm{PB}^{(\mathrm{ref}1)}\left\{
\begin{array}{rclr}
-(1-r)\Delta U + \nabla P &=& F & \text{on~$\Omega$},\\[0.2cm]
\div (U) &=& 0 & \text{on~$\Omega$},\\[0.2cm]
U &=& D & \text{on~$\Gamma$},
\end{array}
\right.
$$
with $F\in L^2(\Omega)$ and $D\in L^2(\T^d)$.
 
\begin{notation}
We denote~$\widehat{F_j}$ the following Fourier coefficients of a function~$F$ defined on $\T^d \times (0,+\infty)$:
\begin{equation*}
\begin{array}{c}
F(X,Y) = \displaystyle \sum_{j\in \Z^d} \widehat{F_j}(Y)e^{ 2\pi \mathrm i j \cdot X}.
\end{array}
\end{equation*}
\end{notation}

\begin{notation}
All constants depending only on the domain and on physical constants will be considered harmless, there will be denoted~$C$.
In the same way, we will use the notation~$Q(Y)$ to denote any polynomial with coefficients depending only on the domain or on physical constant.
In particular the quantities~$C$ and~$Q(Y)$ do not depend on the variables~$x$ or~$X$.
\end{notation}

\begin{proposition}\label{prop:blp1}
If the following conditions are satisfied
$$
\begin{aligned}
& \left\| \widehat{F_0}(Y) \right\| \leq Q(Y)\, \e^{-Y}, \quad && \text{for all $Y>0$},\\
& \left\| \widehat{F_j}(Y) \right\| \leq Q(Y)\, \e^{-\|j\| Y}, \quad && \text{for all $j\in \mathbb Z^d\setminus\{0\}$ and $Y>0$},
\end{aligned}
$$
then~$\mathrm{PB}^{(\mathrm{ref}1)}$  admits a unique solution $(U,P)$ satisfying $\nabla U\in L^2(\Omega)$ and $P\in L^2(\Omega)$.
\begin{enumerate}
\item There exists $U_\infty \in \R^d$ such that
$$
\begin{aligned}
& \left\| \widehat{U_0}(Y)-U_\infty \right\|  + \left| \widehat{P_0}(Y) \right| \leq Q(Y)\, \e^{-Y}, \quad && \text{for all $Y>0$},\\
& \left\| \widehat{U_0}'(Y) \right\|  + \left| \widehat{P_0}'(Y) \right| \leq Q(Y)\, \e^{-Y}, \quad && \text{for all $Y>0$}.
\end{aligned}
$$
\item We have
$$
\|U(X,Y) - U_\infty \| + |P(X,Y)| \leq Q(Y)\, \e^{-Y}, \quad \text{for all $X \in \T^d$, $Y>0$},
$$
and, for all $\ell\geq 1$,
$$
\|\partial_{X}^{\ell} U(X,Y) \| + |\partial_{X}^{\ell} P(X,Y)| \leq Q(Y)\, \e^{-Y}, \quad \text{for all $X \in \T^d$, $Y>0$}.
$$
\end{enumerate}
\end{proposition}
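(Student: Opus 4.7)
The proof splits naturally into two essentially independent parts: well-posedness in the requested functional framework, and exponential decay of the Fourier modes in the periodic tangential variable.

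For well-posedness, I would first construct a suitable divergence-free extension $\overline{D}$ of the boundary datum, respecting any compatibility (zero flux through $\Gamma$ up to a constant vertical flow); a natural choice is a lift with compact support in~$Y$. Setting $V:=U-\overline{D}$, the homogeneous problem for~$V$ is variationally posed on
\[
\mathcal{V}:=\{V\in H^1_{\mathrm{loc}}(\Omega)^{d+1}\,:\ \nabla V\in L^2(\Omega),\ \div V=0,\ V|_\Gamma=0\},
\]
and Lax--Milgram applies to the coercive bilinear form $(V,W)\mapsto(1-r)\int_\Omega\nabla V:\nabla W$. The pressure is recovered by De Rham's theorem in $L^2_{\mathrm{loc}}$; its global $L^2$-integrability will follow a posteriori from the decay analysis. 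Uniqueness follows from the same coercivity applied to the difference of two solutions.

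For the decay, I would restrict to the flat region $\{Y>Y_0\}$ with $Y_0:=\max_X H(X)$ and Fourier-expand in the periodic variable. Writing $U(X,Y)=\sum_j\widehat{U_j}(Y)\,e^{2\pi\mathrm{i}j\cdot X}$ and similarly for~$P$ and~$F$, the Stokes system becomes, for each frequency~$j$, a linear ODE system in~$Y$ coupled by the incompressibility relation $2\pi\mathrm{i}\,j\cdot\widehat{U_j^X}+(\widehat{U_j^Y})'=0$. For $j\neq 0$, eliminating the pressure yields a fourth-order ODE $(\partial_Y^2-(2\pi\|j\|)^2)^2\,\widehat{U_j^Y}=\text{(forcing)}$ whose characteristic roots $\pm 2\pi\|j\|$ are both double; the constraint $\nabla U\in L^2(\Omega)$ rules out the exponentially growing branches, and variation of parameters against the hypothesis $\|\widehat{F_j}(Y)\|\leq Q(Y)\,e^{-\|j\|Y}$ gives $\|\widehat{U_j}(Y)\|+|\widehat{P_j}(Y)|\leq Q(Y)\,e^{-\|j\|Y}$. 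For the zero mode, incompressibility reduces to $(\widehat{U_0^Y})'=0$, which combined with the $L^2$ constraint on the gradient makes $\widehat{U_0^Y}$ a constant, and the tangential part solves $-(1-r)(\widehat{U_0^X})''=\widehat{F_0^X}$; integration from $+\infty$ defines $U_\infty\in\mathbb{R}^d$ and transfers the rate $e^{-Y}$ directly from the hypothesis on $\widehat{F_0}$, while an analogous argument combined with $P\in L^2$ forces $\widehat{P_0}\to 0$ at the same rate.

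Finally, to pass from Fourier to pointwise estimates, I would sum the above bounds over $j\in\mathbb{Z}^d$: since $\sum_{\|j\|\geq 1}e^{-\|j\|Y}\lesssim e^{-Y}$ for $Y$ sufficiently large, the series converges absolutely and produces the announced $\|U(X,Y)-U_\infty\|+|P(X,Y)|\leq Q(Y)\,e^{-Y}$; successive tangential derivatives multiply each Fourier coefficient by factors of~$\|j\|^\ell$ that are easily absorbed by the exponential. The main technical obstacle, in my view, is twofold: first, the careful matching between the Fourier representation built mode by mode on $\{Y>Y_0\}$ and the unique global variational solution, i.e., one must check that the ``initial'' data at $Y=Y_0$ coming from the variational solution does select the purely decaying branch at infinity and rules out spurious $e^{+2\pi\|j\|Y}$ terms; and second, tracking the polynomial prefactors $Q(Y)$ uniformly in~$j$ so that the sum over modes preserves the polynomial structure stated in the proposition.
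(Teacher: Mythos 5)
Your proposal is correct and follows the same strategy as the paper: Fourier decomposition in the periodic variable, explicit ODE integration for the zero mode (which defines~$U_\infty$), biharmonic reduction with variation of parameters for the nonzero modes, and summation of the geometric tail $\sum_{j\neq 0} e^{-\|j\|Y}$ to obtain the pointwise bounds. The paper is terser—it delegates the $j\neq 0$ analysis to a reference and a comparison principle rather than spelling out the fourth-order ODE—but the underlying computation is the one you describe; your Lax--Milgram discussion and the two subtleties you flag (selection of the decaying branch via the $L^2$-gradient constraint, which is in fact automatic from Parseval, and the uniformity of the polynomial prefactor in~$j$, which holds because the degree grows by at most two through resonance) are exactly the details the paper leaves implicit.
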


\begin{remark}
In this Proposition, the vector~$U_\infty$ can be identified as the limit of~$\widehat{U_0}(Y)$ when~$Y$ goes to $+\infty$:
$$U_\infty := \lim_{Y\to\infty} \int_\T U \in \R^d.$$
\end{remark}

\begin{proof}
Let us introduce the following decomposition of the vectors $U=(U^{(1)},U^{(2)})\in \R^d\times \R$ and $F=(F^{(1)},F^{(2)})\in \R^d\times \R$.
Then, we pass to the Fourier transform with respect to~$X$. Passing in the Fourier regime, equations satisfied by $(U,P)$ inside the domain~$\Omega$, in problem~$\mathrm{PB}^{(1)}$, can be translated into
\begin{equation}\label{syst-Fourier0}
\left\{
\begin{array}{rclll}
{\|j\|^2} \widehat{U^{(1)}_j} -  \widehat{U^{(1)}_j}'' +  \mathrm i j \widehat{P_j} &\!=\!& \widehat{F^{(1)}_{j}} \qquad & \text{on $\{Y>0\}$} \qquad & \forall j\in \Z^d,\\
{\|j\|^2} \widehat{U^{(2)}_j} -  \widehat{U^{(2)}_j}'' + \widehat{P_j}' &\!=\!& \widehat{F^{(2)}_{j}} & \text{on $\{Y>0\}$} & \forall j\in \Z^d,\\
\mathrm i j \cdot \widehat{U^{(1)}_j} + \widehat{U^{(2)}_j}' &\!=\!& 0 & \text{on $\{Y>0\}$} & \forall j\in \Z^d,\\
\end{array}
\right.
\end{equation}
where~$\widehat{U^{(1)}_j}'$, $\widehat{U^{(2)}_j}'$ and~$\widehat{P_j}$ belong to $L^2(0,+\infty)$.
Now we solve the Fourier problem and describe the behavior of the solution of the Stokes problem.

\begin{enumerate}
\item Let us discuss the case $j=0$. The system reduces to
\begin{equation*}
\left\{
\begin{array}{rllll}
-  \widehat{U^{(1)}_0}'' =  \widehat{F^{(1)}_{0}}
& ~ \text{with $\widehat{U^{(1)}_0}' \in L^2(0,+\infty)$,} \\
-  \widehat{U^{(2)}_0}'' + \widehat{P_0}' = \widehat{F^{(2)}_{0}}
& ~ \text{with $\widehat{P_0} \in  L^2(0,+\infty)$,}\\
\widehat{U^{(2)}_0}' = 0
& ~ \text{with $\widehat{U^{(2)}_0}' \in  L^2(0,+\infty)$.}
\end{array}
\right.
\end{equation*}
By integration, this leads us to the following equalities 
\begin{equation*}
\begin{aligned}
& \widehat{U^{(1)}_{0}}'(Y) = \int_{Y}^{+\infty}\widehat{F^{(1)}_{0}}(\xi)\, \mathrm d\xi,\\
& \widehat{U^{(2)}_{0}}'(Y) = 0,\\
& \widehat{U^{(1)}_{0}}(Y) = {\widehat{U^{(1)}_{0}}(0)-\displaystyle\int_{0}^{+\infty}\left(\displaystyle\int_{Z}^{+\infty}\widehat{F^{(1)}_{0}}(\xi)\, \mathrm d\xi\right)\, \mathrm dZ} \\
& \hspace{4cm} + {\displaystyle\int_{Y}^{+\infty}\left(\displaystyle\int_{Z}^{+\infty}\widehat{F^{(1)}_{0}}(\xi)\, \mathrm d\xi\right)\, \mathrm dZ},\\
& \widehat{U^{(2)}_{0}}(Y) = \widehat{U^{(2)}_{0}}(0),\\
& \widehat{P_{0}}(Y) = -\displaystyle\int_{Y}^{+\infty}\widehat{F^{(2)}_{0}}(Z)\, \mathrm dZ.
\end{aligned}
\end{equation*}
By assumption on the source term, we have
$$
\begin{aligned}
& \left| \displaystyle\int_{Y}^{+\infty}\displaystyle\int_{Z}^{+\infty}\widehat{F^{(1)}_{0}}(\zeta)\, \mathrm d\zeta\, \mathrm dZ \right| \leq Q(Y)\, \e^{-Y} \\
& \left| \displaystyle\int_{Y}^{+\infty}\widehat{F^{(\ell)}_{0}}(Z)\, \mathrm dZ \right| \leq Q(Y)\, \e^{-Y}, \quad \ell = 1,2.
\end{aligned}
$$
Defining $U_\infty:=(U_\infty^{(1)},U_\infty^{(2)}) \in \R^d\times \R$ as
$$\begin{array}{rcl}
U_\infty^{(1)} &:=& \widehat{U^{(1)}_{0}}(0)-\displaystyle\int_{0}^{+\infty}\left(\displaystyle\int_{Z}^{+\infty}\widehat{F^{(1)}_{0}}(\xi)\, \mathrm d\xi\right)\, \mathrm dZ,\\ 
U_\infty^{(2)} &:=& \widehat{U^{(2)}_{0}}(0),
\end{array}
$$
we obtain, for all $Y>0$,
$$
\begin{aligned}
|\widehat{U^{(1)}_0}(Y)- U_\infty^{(1)} | + |\widehat{U^{(2)}_0}(Y)- U_\infty^{(2)} | + |\widehat{P_0}(Y) | \leq Q(Y)\, \e^{-Y} , \\
|\widehat{U^{(1)}_0}'(Y)| + |\widehat{U^{(2)}_0}'(Y)| + |\widehat{P_0}'(Y) | \leq Q(Y)\, \e^{-Y}.
\end{aligned}
$$
\item Let us discuss the case $j\neq 0$. If the source terms~$\widehat{F^{(1)}_j}$ and~$\widehat{F^{(2)}_j}$ were written as $Q(Y) \e^{-\|\mathbf j\|Y}$ then the estimate is proved in~\cite[Appendix B]{CM12}. In the present case, the source terms only satisfy 
$$
\| \widehat{F^{(1)}_j}(Y) \| \leq Q(Y) \e^{-\|\mathbf j\|Y},\quad \forall Y>0,
$$
$$
| \widehat{F^{(2)}_j}(Y) | \leq Q(Y) \e^{-\|\mathbf j\|Y},\quad \forall Y>0,
$$
and the proof can be adapted from~\cite{CM12} by using a comparison principle.
In particular, we can show that
$$
\begin{aligned}
& \left\| \widehat{U_j}(Y) \right\| + \left| \widehat{P_j}(Y) \right| \leq Q(Y)\, \e^{-\|j\| Y}, \quad && \text{for all $j\in \mathbb Z^d\setminus\{0\}$ and $Y>0$}.
\end{aligned}
$$

\end{enumerate}
\end{proof}

The elementary problems related to the boundary layer correctors with respect to the elastic constraint take the following form:
$$
\mathrm{PB}^{(\mathrm{ref}2)}\left\{
\begin{array}{rclr}
- \Delta \Sigma &=& G & \text{on~$\Omega$},\\[0.2cm]
\partial_N \Sigma &=& N & \text{on~$\Gamma$},
\end{array}
\right.
$$
with $G \in L^2(\Omega)$ and $N\in L^2(\T^d)$.

We note that the problems related to the boundary layer correctors are matricial problems: the unknowns~$\Sigma_k$ have matricial values. The elementary problem that we analyze here is a scalar case but can easily be extend component by component.

\begin{proposition}\label{prop:wp2}
Assume that
$$
\displaystyle \int_{\{Y<0\}} G(X,Y)\, \mathrm{d}X\, \mathrm dY=\displaystyle \int_{\T^d} N(X)\, \mathrm{d}X.
$$
If the following conditions are satisfied
$$
\begin{aligned}
& \left| \widehat{G_0}(Y) \right| \leq Q(Y)\, \e^{-Y}, \quad && \text{for all $Y>0$},\\
& \left| \widehat{G_j}(Y) \right| \leq Q(Y)\, \e^{-\|j\| Y}, \quad && \text{for all $j\in \mathbb Z^d\setminus\{0\}$ and $Y>0$},
\end{aligned}
$$
then~$\mathrm{PB}^{(\mathrm{ref}2)}$  admits a unique solution $\Sigma\in L^2(\Omega)$.
Moreover, we have
$$
\begin{aligned}
& \left| \widehat{\Sigma_0}(Y) \right| \leq Q(Y)\, \e^{-Y}, \quad && \text{for all $Y>0$},\\
& \left| \widehat{\Sigma_j}(Y) \right| \leq Q(Y)\, \e^{-\|j\| Y}, \quad && \text{for all $j\in \mathbb Z^d\setminus\{0\}$ and $Y>0$}.
\end{aligned}
$$
In particular, we have
$$
| \Sigma(X,Y) | \leq Q(Y)\, \e^{-Y}, \quad \text{for all $X \in \T^d$, $Y>0$}.
$$
\end{proposition}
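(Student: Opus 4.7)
The plan is to mimic the strategy of Proposition~\ref{prop:blp1}: first prove existence and uniqueness variationally, then decompose $\Sigma$ in Fourier modes in the transverse variable $X$ on the flat slab $\{Y>0\}$ to extract the exponential decay. For existence, I would work in the Hilbert space $V := \{v \in H^1_{\mathrm{loc}}(\Omega) : \nabla v \in L^2(\Omega)\}$ modulo constants, with bilinear form $a(\Sigma,\Phi) := \int_\Omega \nabla \Sigma \cdot \nabla \Phi$ (coercive on $V/\mathbb{R}$ by a Poincar\'e--Wirtinger-type inequality) and linear form $L(\Phi) := \int_\Omega G\, \Phi - \int_{\T^d} N\, \Phi|_\Gamma$, which vanishes on constants precisely because of the compatibility hypothesis. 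Lax--Milgram then provides a unique solution modulo constants, and the representative in $L^2(\Omega)$ is singled out by requiring decay at $Y\to+\infty$; the Fourier analysis that follows exhibits such a decaying representative.

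For the decay estimates, since $\Omega \cap \{Y>0\} = \T^d \times (0,+\infty)$, I expand $\Sigma(X,Y) = \sum_j \widehat{\Sigma_j}(Y)\, e^{2\pi \mathrm i j \cdot X}$ and likewise for $G$, so that each mode satisfies on $(0,+\infty)$
$$
-\widehat{\Sigma_j}''(Y) + 4\pi^2 \|j\|^2 \widehat{\Sigma_j}(Y) = \widehat{G_j}(Y), \qquad \widehat{\Sigma_j}\in L^2(0,+\infty).
$$
For $j\neq 0$ the ODE has exponential dichotomy with rates $\pm 2\pi\|j\|$; the $L^2$ constraint kills the growing mode, and variation of constants combined with $|\widehat{G_j}(Y)|\leq Q(Y)e^{-\|j\|Y}$ yields $|\widehat{\Sigma_j}(Y)|\leq \tilde Q(Y)e^{-\|j\|Y}$ (the polynomial factor grows by a bounded amount through the convolution with the Green's function), exactly in parallel with the treatment of $\mathrm{PB}^{(\mathrm{ref}1)}$ and \cite{CM12}.

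The genuine obstacle is the zero mode, where the ODE degenerates to $-\widehat{\Sigma_0}'' = \widehat{G_0}$ whose kernel $\{1,Y\}$ contains no decaying function, so the decay of $\widehat{\Sigma_0}$ must be manufactured entirely from two integrations of the source. I would define
$$
\widehat{\Sigma_0}(Y) := \int_Y^{+\infty}\int_\xi^{+\infty} \widehat{G_0}(\eta)\, \mathrm d\eta\, \mathrm d\xi,
$$
and check that any other choice of the two integration constants would introduce an affine term $cY+d$ incompatible with $\Sigma\in L^2(\Omega)$. The bound $|\widehat{G_0}(Y)|\leq Q(Y)e^{-Y}$ survives each integration (integrating $\eta^k e^{-\eta}$ by parts produces again a polynomial times $e^{-Y}$, with degree raised by at most one), yielding $|\widehat{\Sigma_0}(Y)|\leq \tilde Q(Y)e^{-Y}$. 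The compatibility hypothesis is precisely what allows the $L^2$ solution of the variational step to coincide with this doubly integrated primitive; without it, a linear or constant term would be forced and destroy the decay. Summing the Fourier series then delivers the pointwise bound: the zero mode dictates the rate $e^{-Y}$, while the tail $\sum_{j\neq 0}\tilde Q(Y)e^{-\|j\|Y}$ is absorbed into $Q(Y)e^{-Y}$ via the elementary estimate $\sum_{j\neq 0}e^{-\|j\|Y}\leq C e^{-Y}$ for $Y$ bounded below, with $j$-dependent polynomial factors controlled by standard Sobolev embedding on $\T^d$.
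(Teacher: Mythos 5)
Your proof takes essentially the same route as the paper's: Fourier decomposition over the flat slab $\T^d\times(0,+\infty)$, two integrations for the degenerate zero mode, and exponential Green's-function bounds for the nonzero modes, with the paper's terse invocation of \cite[Lemma 2.2]{Chu12} and a ``comparison principle'' replaced by your explicit variation-of-constants argument (which is equivalent, given the positivity of the Green's function). Two small cautions: with the outward vector $N$ the Neumann term enters the weak form with a plus sign, $L(\Phi)=\int_\Omega G\Phi+\int_{\T^d}N\,\Phi|_\Gamma$; and the compatibility hypothesis guarantees \emph{existence} of a finite-Dirichlet-energy solution (Lax--Milgram modulo constants), while the $L^2$ requirement then \emph{selects} the unique decaying representative among those differing by a constant --- your last paragraph slightly conflates these two roles.
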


\begin{proof}  If the source terms satisfy $\widehat{G_{0}}=0$ and, for $j\geq 1$, $\widehat{G_j}(Y)=Q(Y) \e^{-\|\mathbf j\|Y}$, then the estimate is proved in~\cite[Lemma 2.2]{Chu12}. In the present case, the source terms only satisfy 
$$
| \widehat{G_{0}}(Y) | \leq Q(Y)\, \e^{-Y}
\quad \text{and} \quad
| \widehat{G_j}(Y) | \leq Q(Y) \e^{-\|j\|Y}.
$$
The estimate on~$\widehat{\Sigma_{0}}$ can be obtained by straightforward integration. For $j\neq 0$, the proof can be adapted from~\cite{Chu12} by using a comparison principle.
\end{proof}

\paragraph{Applications: analysis of problems $\mathrm{PB}^{(k)}$.}

Let us recall that the definition of the boundary layer correction problems~$\mathrm{PB}^{(k)}$ need to specify the value of~$a_{k-2}$ and~$b_{k-1}$. Let us first describe how to determine~$a_{k-2}$. In order to apply Proposition \ref{prop:blp1}, we need to impose that the source term in the momentum equation of~$\mathrm{PB}^{(k)}$, namely $F:=F_k- a_{k-2}$, satisfies a sharp decrease for each Fourier mode: 
\begin{itemize}
\item Averaging this source term~$F$ with respect to~$X$ gives
$$\begin{array}{rcl}
\widehat{F}_0(Y)&=&{\displaystyle\int_{\T^d} (F_k(X,Y)- a_{k-2})\, \mathrm{d}X}\\
&=&\left( \displaystyle\int_{\T^d} F_k(X,Y) \, \mathrm dX - \displaystyle\lim_{Y\to +\infty}\int_{\T^d} F_k(X,Y)\, \mathrm dX \right)\\
&&\hspace{2cm} + \left( \displaystyle\lim_{Y\to +\infty}\int_{\T^d} F_k(X,Y)\, \mathrm dX-a_{k-2}\right).
\end{array}$$
On one hand, $F_k$ is composed of elementary solutions $(U_i,\widetilde{\Sigma}_i,P_i)_{1\leq i\leq k-1}$ which, by induction, satisfy the expected decreasing behavior. We have
$$
\left|\displaystyle\int_{\T^d} F_k(X,Y) - \lim_{Y\to +\infty}\int_{\T^d} F_k(X,Y)\, \mathrm{d}X\right| \leq Q(Y)\, \e^{-Y}.
$$
On the other hand, in order to satisfy the assumption needed to apply Proposition \ref{prop:blp1}, we impose
\begin{equation}\label{eq:ak}
a_{k-2} = \lim_{Y\to +\infty}\int_{\T^d} F_k(X,Y)\, \mathrm dX.
\end{equation}
\item By induction on~$k$, the Fourier coefficients~$\widehat{F_{j}}$, for $j\neq 0$, of the source term $F= F_k - a_{k-2}$ satisfy
$$
\left\| \widehat{F_j}(Y) \right\| \leq Q(Y)\, \e^{-\|j\| Y},\quad \forall j\in \mathbb Z^d\setminus\{0\},\quad \forall Y>0.
$$
\end{itemize}

Let us now describe how to determine~$b_{k-1}$.
In order to apply Proposition~\ref{prop:wp2}, we need to impose the compatibility condition between the source term in the Laplace equation of~$\mathrm{PB}^{(k)}$, namely~$G:=\Di^{-1}G_k$, and the Neumann boundary term $N = N_k- b_{k-1}$:
\begin{equation}\label{eq:bk}
b_{k-1}=\int_{\T^d} N_k(X)\, \mathrm dX - \Di^{-1} \int_{Y<0} G_k(X,Y)\, \mathrm dX\, \mathrm dY.
\end{equation}
Besides, by induction on~$k$, the Fourier coefficients~$\widehat{G_j}$ of the source term~$G:=\Di^{-1}G_k$ satisfy
$$\begin{array}{rcll}
\left\| \widehat{G_0}(Y) \right\| &\leq Q(Y)\, \e^{-Y},& \forall Y>0,\\
\left\| \widehat{G_j}(Y) \right\| &\leq Q(Y)\, \e^{-\|j\| Y},& \forall j\in \mathbb Z^d\setminus\{0\},& \forall Y>0.\phantom{\Bigg(}
\end{array}
$$

\section{Error estimates}\label{Section5}

\subsection{Remainder}

The asymptotic expansion truncated at a given order leads us to introduce the so-called remainder $(\cR,\cQ,\cS)$:
\begin{equation*}
\begin{array}{lllll}
u(x) &=& \displaystyle
\sum_{j=0}^{N} \varepsilon^j \left[ u_j\left(x\right) + U_j\left(x,\displaystyle \frac{x}{\varepsilon}\right) \right] &+& \varepsilon^N \mathcal R(x), \\[0.25cm]
p(x) &=& \displaystyle
\sum_{j=0}^{N} \varepsilon^{j} \left[ p_j\left(x\right) + P_{j+1}\left(x,\displaystyle \frac{x}{\varepsilon}\right) \right] &+& \varepsilon^N \cQ(x), \\[0.25cm]
\sigma(x) &=& \displaystyle
\sum_{j=0}^{N} \varepsilon^j \left[ \sigma_j\left(x\right) + \Sigma_j\left(x,\displaystyle \frac{x}{\varepsilon}\right) \right] &+& \varepsilon^N \cS(x). \\[0.25cm]
\end{array}
\end{equation*}
We aim at establishing estimates on the remainder, {\em at any order}.

Applying the Oldroyd operator to the remainder $(\mathcal R,\cQ,\cS)$ and considering the properties of the elementary solutions, we get the following set of equations
\begin{itemize}
\item[$\bullet$] momentum equation, in~$\omega_\varepsilon$: 
$$\Re\, \left(\varepsilon^N \cR\cdot\nabla\cR+\mathcal L_{\varepsilon}^{(A)}(\cR)\right)- (1-r)\Delta \cR + \nabla \cQ =\div(\cS)+ \mathcal F_\varepsilon,$$
\item[$\bullet$] continuity equation, in~$\omega_\varepsilon$:
$$\mathrm{div}(\cR) = 0,$$
\item[$\bullet$] constitutive equation, in~$\omega_\varepsilon$:
$$\begin{array}{r}
\We\, \left(\varepsilon^N \left(\cR \cdot \nabla\cS + g_{a}(\nabla \cR, \cS)\right)+ \mathcal L_{\varepsilon}^{(B)}( \cR )+ \mathcal L_{\varepsilon}^{(C)}( \cS )\right) + \cS - \Di \Delta \cS\\ 
= 2r \D(\cR) + \mathcal G_\varepsilon,
\end{array}$$
\item[$\bullet$] boundary conditions on the velocity, on $\gamma_\varepsilon^- \cup \gamma^+$:
$$\cR = \mathcal D_\varepsilon^{(\pm)},$$
\item[$\bullet$] boundary conditions on the elastic constraint, on $\gamma_\varepsilon^- \cup \gamma^+$: 
$$\partial_{\mathbf n} \cS = \mathcal N_\varepsilon^{(\pm)},$$
\end{itemize}
where operators are defined as follows.
$$
\begin{array}{ccl}
\mathcal L_{\varepsilon}^{(A)}(\cR)&=&\cR\cdot\left( \displaystyle \sum_{k=0}^{N-1}\varepsilon^k(\nabla u_k + \nabla_x U_k + \nabla U_{k+1}) \right)
\\
&&+ \varepsilon^N\, \cR \cdot (\nabla u_N + \nabla_x U_N) + \left(\displaystyle\sum_{k=0}^N \varepsilon^k( u_k + U_k) \right) \cdot \nabla\cR,\\
\mathcal L_{\varepsilon}^{(B)}( \cR )&=&\cR \cdot\left(\displaystyle\sum_{k=0}^{N-1} \varepsilon^k (\nabla \sigma_k + \nabla_x \Sigma_k + \nabla \Sigma_{k+1}) \right)\\
&&+ \varepsilon^N\, \cR \cdot (\nabla \sigma_N + \nabla_x \Sigma_N) + g_a \left(\nabla \cR,\displaystyle\sum_{k=0}^N \varepsilon^k (\sigma_k + \Sigma_k) \right),\\
\mathcal L_{\varepsilon}^{(C)}( \cS )&=&\left(\displaystyle\sum_{k=0}^N \varepsilon^k (u_k + U_k)\right) \cdot \nabla \cS+ \varepsilon^N g_a (\nabla u_N + \nabla_x U_N , \cS)\\
&&+ g_a \left( \displaystyle\sum_{k=0}^{N-1}\varepsilon^k( \nabla u_k + \nabla_x U_k + \nabla U_{k+1} ),\cS\right).\\
\end{array}
$$
The source terms are defined by 
$$
\begin{array}{rcl}
\mathcal F_\varepsilon&=&-\Re \left(\displaystyle\sum_{k=1}^N ( u_k + U_k ) \cdot (\nabla u_{N-k} + \nabla_x U_{N-k} + \nabla U_{N+1-k}) \right)\\[0.2cm]
&&\quad- \Re\, u_0 \cdot (\nabla u_N + \nabla_x U_N)+ (1-r)\Delta_x U_N + (1-r) \Delta_x U_N\\[0.2cm]
&& \quad - \nabla p_N - \nabla_x P_{N+1} + \div (\sigma_N) + \div_x (\Sigma_N),\\[0.3cm]
\mathcal G_\varepsilon&=&2r \D(u_N) + 2r \D_x(U_N) - \sigma_N - \Sigma_N + \Di\, \Delta \sigma_N + \Di\, \Delta_x \Sigma_N\\[0.2cm]
&&\quad - \We \displaystyle\sum_{k=1}^N \Bigg( (u_k + U_k) \cdot ( \nabla \sigma_{N-k} + \nabla_x \Sigma_{N-k} + \nabla \Sigma_{N+1-k} ) \Bigg)\\[0.cm]
&&\quad-\We\, u_0 \cdot ( \nabla \sigma_N + \nabla_x \Sigma_N )\\[0.2cm]
&&\quad - \We \displaystyle\sum_{k=0}^{N-1} \Bigg(g_a (\nabla u_k + \nabla_x U_k + \nabla U_{k+1} , \sigma_{N-k} + \Sigma_{N-k} ) \Bigg)\\[0.2cm]
&&\quad -\We\, g_a ( \nabla u_N + \nabla_x U_N , \sigma_0 ),\\
\end{array}
$$
and the contributions to the boundary relation are given by
$$
\begin{array}{rcl}
\mathcal D_\varepsilon^{(+)}&=&\displaystyle\sum_{k=0}^N \varepsilon^{k-N} \left(\lim_{Y\to\infty} \int_\T U_k - U_k|_{Y=1/\varepsilon}\right),\\
\mathcal D_\varepsilon^{(-)}&=&-\varepsilon^{-N} \displaystyle\sum_{k= 0}^{N} \varepsilon^k \displaystyle\sum_{i=N-k+1}^{+\infty}\displaystyle\frac{(-\varepsilon H)^i}{i!}\partial_{y}^{(i)} u_k|_{\gamma^-_0},\\
\mathcal N_\varepsilon^{(+)}&=&-\displaystyle\sum_{k=0}^{N-1} \varepsilon^{k-N} \partial_{Y}\Sigma_{k+1}|_{y=1,Y=1/\varepsilon},\\
\mathcal N_\varepsilon^{(-)}&=&-\varepsilon^{-N} \displaystyle\sum_{k= 0}^{N} \varepsilon^{k}\displaystyle\sum_{i=N-k+1}^{+\infty}\displaystyle\frac{(-\varepsilon H)^i}{i!}\partial_{y}^{(i)}((\nabla H \cdot \nabla_x) \sigma_k + \partial_{y}\sigma_{k})|_{\gamma^-_0}.
\end{array}
$$

\begin{proposition}\label{prop:estimateST}
The following estimates hold:
$$
\|\mathcal F_\varepsilon\|_{L^2} \leq C,\qquad  \|\mathcal G_\varepsilon\|_{L^2} \leq C.
$$
For all $\ell \geq 0$, for all $x\in \T^d$, we have
$$
\begin{aligned}
& |\nabla_x^\ell \mathcal D_\varepsilon^{(+)}(x)| \leq Q\left(\frac{1}{\varepsilon}\right) \, \e^{-\frac{1}{\varepsilon}},
\qquad
&& |\nabla_x^\ell \mathcal D_\varepsilon^{(-)}(x)| \leq C\, \varepsilon^{1-\ell},\\
& |\nabla_x^\ell \mathcal N_\varepsilon^{(+)}(x)| \leq Q\left(\frac{1}{\varepsilon}\right) \, \e^{-\frac{1}{\varepsilon}},
\qquad 
&& |\nabla_x^\ell \mathcal N_\varepsilon^{(-)}(x)| \leq C\, \varepsilon^{1-\ell}.
\end{aligned}
$$
\end{proposition}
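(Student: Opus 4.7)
The plan is to bound each contribution to $\mathcal F_\varepsilon$, $\mathcal G_\varepsilon$, $\mathcal D_\varepsilon^{(\pm)}$ and $\mathcal N_\varepsilon^{(\pm)}$ separately, exploiting two complementary properties of the elementary solutions constructed in Section \ref{Section4}: the smoothness of the macroscopic correctors $(u_k,p_k,\sigma_k)$ on the fixed domain $\omega_0$ (inherited from the regularity part of Theorem \ref{theorem} applied to each $\mathrm{pb}^{(k)}$) and the exponential decay in $Y$ of the boundary-layer correctors $(U_k,P_k,\Sigma_k)$ and their derivatives, as provided by Propositions \ref{prop:blp1} and \ref{prop:wp2}.

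For the $L^2(\omega_\varepsilon)$ estimates on $\mathcal F_\varepsilon$ and $\mathcal G_\varepsilon$, I would treat the terms involving only macroscopic quantities with uniform $L^\infty(\omega_0)$ bounds, while every term containing a boundary-layer factor evaluated at $(x,x/\varepsilon,y/\varepsilon)$ would be controlled via the change of variable $Y=y/\varepsilon$, whose Jacobian brings a factor $\varepsilon$. Combined with the pointwise decay $|U_k(x,x/\varepsilon,y/\varepsilon)|\leq Q(y/\varepsilon)\,\e^{-y/\varepsilon}$, this gives
\[
\|U_k(\cdot,\cdot/\varepsilon,\cdot/\varepsilon)\|_{L^2(\omega_\varepsilon)}^2\leq \varepsilon\int_{\T^d}\!\!\int_{-H(X)}^{+\infty}\!\! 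Q(Y)^2\,\e^{-2Y}\,\mathrm dY\,\mathrm dX\leq C\varepsilon,
\]
and analogously for $P_k$, $\Sigma_k$ and their $(X,Y)$-derivatives. Since $\mathcal F_\varepsilon$ and $\mathcal G_\varepsilon$ are finite sums of linear or bilinear combinations of such factors together with the macroscopic correctors, the uniform $L^2$ bound follows.

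The upper-boundary defects $\mathcal D_\varepsilon^{(+)}$ and $\mathcal N_\varepsilon^{(+)}$ consist of boundary-layer correctors (or their $Y$-derivatives) evaluated at $Y=1/\varepsilon$, where Proposition \ref{prop:blp1} furnishes a pointwise bound by $Q(1/\varepsilon)\,\e^{-1/\varepsilon}$. The finitely many prefactors $\varepsilon^{k-N}$, and the extra factors $\varepsilon^{-1}$ generated by each $\nabla_x$ that hits the fast variable $X=x/\varepsilon$ via the chain rule, are polynomial in $1/\varepsilon$ and are absorbed into $Q$ without affecting the exponential. For the oscillating boundary, $\mathcal D_\varepsilon^{(-)}$ and $\mathcal N_\varepsilon^{(-)}$ are Taylor remainders at $y=0$ starting at order $i\geq N-k+1$; term-by-term each is bounded by $C\varepsilon^{-N}\cdot\varepsilon^k\cdot(\varepsilon\|H\|_\infty)^{N-k+1}=C\varepsilon$ thanks to the $L^\infty$ regularity of $\partial_y^{(i)} u_k$ on $\omega_0$. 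Every subsequent derivative in $x$ hitting $H(x/\varepsilon)$ produces one factor $\varepsilon^{-1}$, giving the announced $\varepsilon^{1-\ell}$.

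The main obstacle I anticipate is in the first step, namely checking that the $(X,Y)$-derivatives of the boundary-layer correctors inherit the same exponential decay as the correctors themselves, since Proposition \ref{prop:blp1} explicitly states the decay only for the first $Y$-derivative of the zeroth Fourier mode and for pure $X$-derivatives. I would obtain the missing estimates by differentiating the linear systems $\mathrm{PB}^{(k)}$ in the slow variable $x$ and, when necessary, in $X$, and reapplying Propositions \ref{prop:blp1}--\ref{prop:wp2} to the derived systems whose right-hand sides are by induction already known to decay exponentially. Once this bootstrap is granted, the remainder of the argument is essentially bookkeeping, matching each explicit negative power of $\varepsilon$ coming from the chain rule to the $\sqrt{\varepsilon}$ produced by the $y$-integration or to the polynomial $Q(1/\varepsilon)$ absorbed by the surrounding exponential.
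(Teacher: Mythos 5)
Your proposal follows essentially the same route as the paper. For $\mathcal F_\varepsilon,\mathcal G_\varepsilon$ the paper simply declares the $L^2$ bound "obvious" and you spell out the underlying mechanism (macroscopic factors are $L^\infty$-bounded; each boundary-layer factor contributes an $O(\sqrt\varepsilon)$ via the substitution $Y=y/\varepsilon$), which is the standard justification. For the top-boundary defects $\mathcal D^{(+)}_\varepsilon,\mathcal N^{(+)}_\varepsilon$ you invoke Proposition~\ref{prop:blp1} at $Y=1/\varepsilon$ and absorb the negative powers of $\varepsilon$ from the chain rule into the polynomial $Q$, exactly as in the paper. For $\mathcal D^{(-)}_\varepsilon,\mathcal N^{(-)}_\varepsilon$ you bound each Taylor-tail term-by-term, getting the same $\varepsilon^{-N}\cdot\varepsilon^k\cdot\varepsilon^{N-k+1}=\varepsilon$ bookkeeping that the paper obtains in one stroke via the Lagrange remainder and the observation that the rewritten expression is a finite sum; both arguments are equivalent and rely on the same $L^\infty$ regularity of $\partial_y^{(i)}u_k$ on $\omega_0$.

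The one place where you go slightly beyond the paper is the explicit remark that Proposition~\ref{prop:blp1} does not literally cover mixed $(X,Y)$ or higher-order $Y$ derivatives, and your proposed fix (differentiate the linear systems $\mathrm{PB}^{(k)}$ in the slow variable and bootstrap through Propositions~\ref{prop:blp1}--\ref{prop:wp2}) is sound; the paper leaves this implicit. This is a legitimate tightening rather than a different proof strategy.
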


\begin{proof}
\begin{itemize}
\item[]
\item The estimates for~$\mathcal F_\varepsilon$ and~$\mathcal G_\varepsilon$ are obvious.
\item By using Proposition~\ref{prop:blp1}, we know that for each integer~$k$ we have
$$
\left\|U_k \left( x,\frac{x}{\varepsilon},\frac{1}{\varepsilon} \right) - \lim_{Y\to\infty} \int_\T U_k \right\| \leq Q\left(\frac{1}{\varepsilon}\right) \, \e^{-\frac{1}{\varepsilon}}, \quad \text{for all $x \in \T^d$},
$$
and, for all $\ell\geq 1$,
$$
\left\|\nabla_X^{\ell} U_k \left( x,\frac{x}{\varepsilon},\frac{1}{\varepsilon}\right) \right\| \leq Q\left(\frac{1}{\varepsilon}\right) \, \e^{-\frac{1}{\varepsilon}}, \quad \text{for all $x \in \T^d$}.
$$
We immediately deduce that for all $\ell\geq 0$ and for all $x\in \T^d$ we have
$$
|\nabla_x^\ell \mathcal D_\varepsilon^{(+)}(x)| \leq Q\left(\frac{1}{\varepsilon}\right) \, \e^{-\frac{1}{\varepsilon}}.
$$
\item We estimate $\nabla_x^\ell \mathcal D_\varepsilon^{(-)}$ remarking that, using the Taylor formulae, for each integer~$k$ we can write
$$
\sum_{i=N-k+1}^{+\infty} \frac{(-\varepsilon H)^i}{i!}\partial_y^{(i)}u_k(x,0)
= \frac{(-\varepsilon H)^{N-k+1}}{(N-k+1)!}\partial_{y}^{(N-k+1)} u_k(x,\xi_k),
$$
with $\xi_k\in [0,-\varepsilon H(x/\varepsilon)]$.
That implies
$$
\mathcal D_\varepsilon^{(-)}(x,-\varepsilon H(x/\varepsilon))=-\varepsilon \displaystyle \sum_{k=0}^N \displaystyle\frac{(-H)^{N-k+1}}{(N-k+1)!}\partial_{y}^{(N-k+1)} u_k(x,\xi_k).
$$
Now, since~$\mathcal D_\varepsilon^{(-)}$ is a finite sum, the estimate directly follows from its analysis.
\item Finally, the estimate on $\nabla_x^\ell \mathcal N_\varepsilon^{(\pm)}$ is based on the same arguments, noticing that the Neumann data, for the remainder, can be reduced to a {\em finite} sum of boundary terms, namely
$$\begin{array}{l}
\mathcal N_\varepsilon^{(N)}(x,-\varepsilon H(x/\varepsilon)) \\
\hspace{0.5cm}= \displaystyle\varepsilon \sum_{k=0}^N \frac{(-H)^{N-k+1}}{(N-k+1)!} \partial_y^{(N-k+1)}((\nabla H \cdot \nabla_x) \sigma_k + \partial_y \sigma_k)(x,\zeta_k).
\end{array}$$
\end{itemize}
\end{proof}

\subsection{Lift procedure}

Well-posedness of the set of equations satisfied by the remainder is obtained by means of construction. Let us point out the fact that the remainder satisfies a diffusive Oldroyd-type system. 

In this step, we aim at modifying the set of equations by using a lift procedure in order to deal with {\em homogeneous} boundary conditions and preserve the {\em homogeneous} incompressibility condition. 

\begin{definition}
Let $\tau \in C^\infty(\mathbb R)$ be such that 
$$\tau(y)=\left\{
\begin{array}{ll}
0 \quad & \textnormal{if $y\leq 0$,}\\
1       & \textnormal{if $y\geq 1$.}
\end{array}\right.
$$
We define~$\mathcal R_{\mathrm{bound}}$ and~${\mathcal S}_{\mathrm{bound}}$ as
\begin{equation}\label{bound-definition}
\begin{aligned}
& {\cR}_{\mathrm{bound}}(x,y) = \tau(y) \, \mathcal D_\varepsilon^{(+)}(x) + (1-\tau(y)) \, \mathcal D_\varepsilon^{(-)}(x), \\
& {\cS}_{\mathrm{bound}}(x,y) = \tau(y) \, \mathcal N_\varepsilon^{(+)}(x) + (1-\tau(y)) \, \mathcal N_\varepsilon^{(-)}(x), \\
\end{aligned}
\end{equation}
and we define~${\cR}_{\mathrm{div}}$ as a solution of
$$
\left\{
\begin{array}{rcll}
\mathrm{div} ({{\cR}_{\mathrm{div}}}) & = & -\mathrm{div} ({{\cR}_{\mathrm{bound}}}) \quad & \text{in~$\omega_\varepsilon$},\\
{{\cR}_{\mathrm{div}}} & =&0 & \text{on $\gamma_\varepsilon^- \cup \gamma^+$}.
\end{array}
\right.
$$
The lift velocity and constraint fields are thus defined as:
$$
{\cR}_{\mathrm{lift}}:={\cR}_{\mathrm{bound}}+{\cR}_{\mathrm{div}},\qquad {\cS}_{\mathrm{lift}}:={\cS}_{\mathrm{bound}}
$$
and the {\em lifted} velocity and constraint field are thus defined as:
$$
\widetilde{\cR}:={\cR}-{\cR}_{\mathrm{lift}},\quad \widetilde{\cQ}:=\cQ, \quad  \widetilde{\cS}:={\cS}-{\cS}_{\mathrm{lift}}.
$$
\end{definition}

\begin{remark} The lifted remainder $(\widetilde{\cR},\widetilde{\cS})$ satisfies a system which is identical to the system satisfied by $({\cR},{\cS})$, up to some modifications:
\begin{itemize}
\item the boundary conditions are {\em homogeneous}~;
\item the incompressibility condition is (still) homogeneous~;
\item the source terms~$\mathcal F_{\varepsilon}$, $\mathcal G_{\varepsilon}$, and linear operators~$\mathcal L^{(A)}$, $\mathcal L^{(B)}$, $\mathcal L^{(C)}$ have been (slightly) modified (see further, page~\pageref{coeff-modif})
\end{itemize}
\end{remark}

\begin{remark} The definition of~${\mathcal R}_{\mathrm{div}}$ is guaranteed by the following result, due to Bogovskii \cite{Bog79} (see also \cite{BF06}):

\begin{proposition}[Bogovskii]\label{prop:bogovskii}
If $\mathcal H \in {H}^m(\omega_\varepsilon)$, $m\geq 0$, is such that
$$
\displaystyle\int_{\Omega_{\varepsilon}} \mathcal H=0,
$$
then there exists a solution $\widetilde{\cR} \in H^{m+1}(\omega_\varepsilon)$ of
$$
\left\{
\begin{array}{rcll}
\mathrm{div} (\widetilde{\cR}) & = & \mathcal H \quad & \text{in~$\omega_\varepsilon$},\\
\widetilde{\cR} & =& 0 & \text{on $\gamma_\varepsilon^- \cup \gamma^+$},
\end{array}
\right.
$$
such that
\begin{equation*}
\| \nabla \widetilde{\cR} \|_{H^m(\omega_\varepsilon)}
\leq
C \, \| \mathcal H \|_{H^m(\omega_\varepsilon)}.
\end{equation*}
\end{proposition}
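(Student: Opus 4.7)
The plan is to rely on the classical Bogovskii construction of an explicit right inverse for the divergence operator, adapted to the rough channel $\omega_\varepsilon$.

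First, I would recall the construction on a star-shaped domain. Suppose $D$ is a bounded domain which is star-shaped with respect to an open ball $B \subset D$, and fix a smooth bump function $\omega$ supported in $B$ with $\int_B \omega = 1$. For $\mathcal H \in L^2(D)$ with $\int_D \mathcal H = 0$, Bogovskii's operator
\[
(\mathcal B \mathcal H)(x) = \int_D \mathcal H(y) \, \mathbf{G}(x,y) \, dy
\]
with the explicit kernel
\[
\mathbf{G}(x,y) = \frac{x-y}{|x-y|^d} \int_{|x-y|}^{+\infty} \omega\bigl(y + s \tfrac{x-y}{|x-y|}\bigr)\, s^{d-1}\, ds
\]
produces a vector field satisfying $\mathrm{div}(\mathcal B \mathcal H) = \mathcal H$ in $D$ and vanishing on $\partial D$. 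A direct inspection shows that $\nabla_x \mathbf G$ is a Calderón--Zygmund kernel, hence $\mathcal B$ extends to a bounded operator $H^m(D) \cap \{\int = 0\} \to H^{m+1}(D) \cap H^1_0(D)$ for every $m \geq 0$.

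Second, since $\omega_\varepsilon$ is a bounded Lipschitz domain but not star-shaped, I would cover it by a finite family of overlapping sub-domains $(D_k)$ each star-shaped with respect to a ball. Pick a smooth partition of unity $(\chi_k)$ subordinated to this covering and decompose
\[
\mathcal H = \sum_k \bigl( \chi_k \mathcal H - c_k \,\psi_k \bigr),
\]
where $\psi_k$ is a fixed test function supported in $D_k \cap D_{k+1}$ with unit integral and the constants $c_k$ are tuned recursively so that each summand has zero mean on $D_k$; the global mean condition $\int_{\omega_\varepsilon}\mathcal H = 0$ ensures the procedure closes. Applying $\mathcal B_{D_k}$ to each summand and summing gives the solution $\widetilde{\mathcal R}$; the $H^{m+1}$ bound follows by linearity from the local Calderón--Zygmund estimates.

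The main obstacle, which is implicit in how this proposition is used later, is obtaining the constant $C$ \emph{uniformly in} $\varepsilon$. The natural covering has to respect the two-scale geometry: I would split $\omega_\varepsilon$ into the smooth part $\{y \geq \varepsilon \|H\|_\infty\}$, which sits in a fixed domain $\omega_0$-like piece and therefore inherits an $\varepsilon$-independent estimate, and the thin rough layer $\{-\varepsilon H(x/\varepsilon) < y < \varepsilon \|H\|_\infty\}$, rescaled by the homothety $(X,Y) = (x/\varepsilon, y/\varepsilon)$ to a fixed periodic strip similar to the boundary-layer domain $\Omega$. Applying Bogovskii on this fixed strip and tracking the scaling of $\nabla$ and Lebesgue measure contributes only explicit powers of $\varepsilon$ that cancel in the bound $\|\nabla \widetilde{\mathcal R}\|_{H^m} \lesssim \|\mathcal H\|_{H^m}$. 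The periodicity of $H$ is essential here: it allows the rescaled problem to live on a single fixed reference cell, so that the Calderón--Zygmund constant and the star-shapedness parameter do not deteriorate as $\varepsilon \to 0$.
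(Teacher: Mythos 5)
The paper does not prove this proposition --- it is presented as a classical result and cited to \cite{Bog79,BF06} --- so there is no internal argument to compare against. Your reconstruction is the standard one from that literature: the explicit kernel on a star-shaped domain, the Calder\'on--Zygmund bound on $\nabla_x\mathbf G$, and the reduction of a general Lipschitz domain to a chain of star-shaped patches. It is sound in outline, with one cosmetic slip: the decomposition $\mathcal H=\sum_k(\chi_k\mathcal H-c_k\psi_k)$ does not telescope as written; the usual form is $g_k=\chi_k\mathcal H+c_{k-1}\psi_{k-1}-c_k\psi_k$ with $c_0=0$ and $\psi_{k-1}$ supported in $D_{k-1}\cap D_k$, so that $\sum_k g_k=\mathcal H-c_M\psi_M$ and the zero-mean hypothesis is precisely what forces $c_M=0$. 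The most valuable part of your proposal is the remark on $\varepsilon$-uniformity of the constant: the paper uses this inequality in Proposition~\ref{prop:estime-lift} with a tacitly $\varepsilon$-independent $C$, but a naive covering of $\omega_\varepsilon$ by patches tied to the roughness scale would produce a constant degrading as $\varepsilon\to 0$. Your two-scale split --- a fixed bulk piece plus a boundary layer rescaled under $(X,Y)=(x/\varepsilon,y/\varepsilon)$ to a fixed periodic strip, with the change-of-variables bookkeeping --- is one workable way to obtain uniformity; another is to invoke the John-domain version of the Bogovskii estimate, whose constant depends only on the John constant, which is uniform in $\varepsilon$ for the channel $\omega_\varepsilon$ since $H$ is fixed and Lipschitz. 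Either way, this is a point the paper passes over in silence, and your proposal correctly flags it.
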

Thus, the existence of such a lift function~$\widetilde{\cR}$ relies on the identity $\int_{\Omega_{\varepsilon}} \mathcal H=0$ with $\mathcal H=-\mathrm{div} ({{\cR}_{\mathrm{bound}}})$. By the Stokes formula, we have
$$
\displaystyle\int_{\Omega_{\varepsilon}} \mathrm{div} ({{\cR}_{\mathrm{bound}}})=\displaystyle\int_{\gamma_{\varepsilon}^-} \mathcal D_\varepsilon^{(-)} \cdot \mathbf n+\displaystyle\int_{\gamma^+} \mathcal D_\varepsilon^{(+)} \cdot \mathbf n=\displaystyle\int_{\Omega_{\varepsilon}} \mathrm{div} ({{\cR}})=0.
$$ 
\end{remark}

\begin{proposition}\label{prop:estime-lift}
The following estimates hold:
$$
\| \mathcal R_{\mathrm{lift}}\|_{H^2}\leq C\, \varepsilon^{-1},
\qquad
\| \mathcal S_{\mathrm{lift}}\|_{H^2}\leq C\, \varepsilon^{-1}.
$$
\end{proposition}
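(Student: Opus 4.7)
\emph{Plan of the proof.} The strategy is to control the three building blocks of the lift, namely $\mathcal R_{\mathrm{bound}}$, $\mathcal R_{\mathrm{div}}$ and $\mathcal S_{\mathrm{bound}}=\mathcal S_{\mathrm{lift}}$, separately in $H^{2}(\omega_{\varepsilon})$, and then combine them via the triangle inequality. All the pointwise bounds will come from Proposition~\ref{prop:estimateST} together with the smoothness (and $\varepsilon$-independence) of the cut-off $\tau$, while the Bogovskii estimate of Proposition~\ref{prop:bogovskii} will handle $\mathcal R_{\mathrm{div}}$.

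\textbf{Step 1 (boundary lifts).} I would first treat $\mathcal R_{\mathrm{bound}}$ and $\mathcal S_{\mathrm{bound}}$ directly from~\eqref{bound-definition}. Since the $y$-dependence enters only through $\tau$, which is a fixed smooth function uniformly bounded (together with $\tau'$ and $\tau''$) on the uniformly bounded vertical extent of $\omega_{\varepsilon}$, the Leibniz rule gives, for any $\alpha+\beta\leq 2$,
$$\bigl|\partial_{x}^{\beta}\partial_{y}^{\alpha}\mathcal R_{\mathrm{bound}}(x,y)\bigr|\leq C\bigl(|\nabla_{x}^{\beta}\mathcal D_{\varepsilon}^{(+)}(x)|+|\nabla_{x}^{\beta}\mathcal D_{\varepsilon}^{(-)}(x)|\bigr),$$
and analogously for $\mathcal S_{\mathrm{bound}}$. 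By Proposition~\ref{prop:estimateST} the dominant contribution is $|\nabla_{x}^{2}\mathcal D_{\varepsilon}^{(-)}|\leq C\varepsilon^{-1}$ (the $\mathcal D_{\varepsilon}^{(+)}$ and $\mathcal N_{\varepsilon}^{(+)}$ parts being exponentially small), whence an $L^{\infty}$ bound of size $\varepsilon^{-1}$. Since $|\omega_{\varepsilon}|$ is uniformly bounded, this gives $\|\mathcal R_{\mathrm{bound}}\|_{H^{2}}+\|\mathcal S_{\mathrm{bound}}\|_{H^{2}}\leq C\varepsilon^{-1}$, which already disposes of $\mathcal S_{\mathrm{lift}}$.

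\textbf{Step 2 (divergence correction).} To apply Proposition~\ref{prop:bogovskii} with $m=1$, I would first verify the compatibility condition $\int_{\omega_{\varepsilon}}\mathrm{div}(\mathcal R_{\mathrm{bound}})=0$: by construction $\mathcal R_{\mathrm{bound}}$ coincides with $\mathcal R$ on $\gamma_{\varepsilon}^{-}\cup\gamma^{+}$, so Stokes' formula combined with $\mathrm{div}(\mathcal R)=0$ yields the identity (this is exactly the computation recalled in the remark following the definition of $\mathcal R_{\mathrm{div}}$). Proposition~\ref{prop:bogovskii} then provides
$$\|\nabla\mathcal R_{\mathrm{div}}\|_{H^{1}}\leq C\,\|\mathrm{div}(\mathcal R_{\mathrm{bound}})\|_{H^{1}}\leq C\,\|\mathcal R_{\mathrm{bound}}\|_{H^{2}}\leq C\varepsilon^{-1},$$
and the Poincar\'e inequality (available because $\mathcal R_{\mathrm{div}}$ vanishes on $\partial\omega_{\varepsilon}$) upgrades this to $\|\mathcal R_{\mathrm{div}}\|_{H^{2}}\leq C\varepsilon^{-1}$. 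Adding the bound of Step~1 concludes.

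\textbf{Main obstacle.} The only non-routine point is to ensure that the constant in Proposition~\ref{prop:bogovskii} (and the Poincar\'e constant) are uniform in $\varepsilon$. The Lipschitz character of $\partial\omega_{\varepsilon}$ is controlled by $\|\nabla H\|_{\infty}$, which is independent of $\varepsilon$, and $\omega_{\varepsilon}$ is sandwiched between two fixed slabs; a rescaling argument on the periodic roughness pattern (or a decomposition into a uniformly bounded family of star-shaped subdomains, each of which carries a Bogovskii operator with controlled norm) yields a uniform constant. Once this uniformity is granted, the remainder of the argument is entirely mechanical.
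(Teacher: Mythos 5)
Your proof follows the same route as the paper's: bound $\mathcal R_{\mathrm{bound}}$ and $\mathcal S_{\mathrm{bound}}$ directly in $H^2$ from the pointwise estimates of Proposition~\ref{prop:estimateST} and the smoothness of $\tau$, then control $\mathcal R_{\mathrm{div}}$ via the Bogovskii estimate of Proposition~\ref{prop:bogovskii} combined with Poincar\'e, and conclude by the triangle inequality. The one thing you add that the paper leaves tacit is the remark on the $\varepsilon$-uniformity of the Bogovskii and Poincar\'e constants on the family $\omega_\varepsilon$, a legitimate point that sharpens an implicit assumption in the original argument.
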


\begin{proof}
\begin{itemize}
\item[]
\item By definition of~$\mathcal R_{\mathrm{bound}}$ and~$\mathcal S_{\mathrm{bound}}$ (see~\ref{bound-definition}), the estimates
$$
\| \mathcal R_{\mathrm{bound}}\|_{H^2}\leq C\, \varepsilon^{-1},
\qquad
\| \mathcal S_{\mathrm{bound}}\|_{H^2}\leq C\, \varepsilon^{-1}.
$$
directly follows from the Proposition~\ref{prop:estimateST}.
\item Following the Bogovskii inequality (see Proposition~\ref{prop:bogovskii}) and the Poincaré inequality, we have
$$
\begin{array}{rcll}
\| \mathcal R_{\mathrm{div}}\|_{H^2} & \leq C\, & \| \mathcal R_{\mathrm{bound}}\|_{H^2}.
\end{array}
$$ 
\item Since $\mathcal R_{\mathrm{lift}} = \mathcal R_{\mathrm{bound}}+\mathcal R_{\mathrm{div}}$ and $\mathcal S_{\mathrm{lift}} = \mathcal S_{\mathrm{bound}}$, the two previous steps imply the result of the Proposition~\ref{prop:estime-lift}.
\end{itemize}
\end{proof}

Now estimates for a homogeneous (w.r.t. boundary conditions and incompressibility condition) system have to be established. This is the purpose of the next subsection.

\subsection{Estimate on the remainder}

The {\em lifted} remainder $(\widetilde{\cR},\widetilde{Q},\widetilde{\cS})$ satisfies the following system:
\begin{itemize}
\item[$\bullet$] momentum equation, in~$\omega_\varepsilon$: 
$$\varepsilon^N\, \Re\, \widetilde{\cR} \cdot \nabla \widetilde{\cR} + \widetilde{\mathcal L_{\varepsilon}^{(A)}}(\widetilde{\cR})- (1-r)\Delta \widetilde{\cR} + \nabla \widetilde{Q} = \div(\widetilde{\cS})+ \widetilde{\mathcal F_\varepsilon},$$
\item[$\bullet$] constitutive equation, in~$\omega_\varepsilon$:
$$\begin{array}{r}
\varepsilon^N\, \We\, \left( \widetilde{\cR} \cdot \nabla \widetilde{\cS} + g_a(\nabla \widetilde{\cR}, \widetilde{\cS}) \right) + \widetilde{\mathcal L_{\varepsilon}^{(B)}}(\widetilde{\cR}) + \widetilde{\mathcal L_{\varepsilon}^{(C)}}(\widetilde{\cS}) + \widetilde{\cS} - \Di \Delta \widetilde{\cS} \\
= 2r \D(\widetilde{\cR})+ \widetilde{\mathcal G_\varepsilon}
\end{array}$$
\item[$\bullet$] the {\em homogeneous} incompressibility condition, {\em homogeneous} Dirichlet conditions for the velocity, {\em homogeneous} Neumann conditions for the constraint.
\end{itemize}

The linear operators are given by:\label{coeff-modif}
$$\begin{array}{ccl}
\widetilde{\mathcal L_{\varepsilon}^{(A)}}(\widetilde{\cR})&=&\mathcal L_{\varepsilon}^{(A)}(\widetilde{\cR})+\Re\, \varepsilon^N\, (\widetilde{\cR}\cdot\nabla\cR_{\mathrm{lift}}+ \cR_{\mathrm{lift}}\cdot\nabla\widetilde{\cR}), \\
\widetilde{\mathcal L_{\varepsilon}^{(B)}}(\widetilde{\cR})&=&\mathcal L_{\varepsilon}^{(B)}(\widetilde{\cR})+\We\, \varepsilon^N (\widetilde{\cR}\cdot\nabla\cS_{\mathrm{lift}}+g_{a}(\nabla\widetilde{\cR}, \cS_{\mathrm{lift}})), \\
\widetilde{\mathcal L_{\varepsilon}^{(C)}}(\widetilde{\cS})&=&\mathcal L_{\varepsilon}^{(C)}(\widetilde{\cS})+\We\, \varepsilon^N (\cR_{\mathrm{lift}}\cdot\nabla\widetilde{\cS}+g_{a}(\nabla{\cR}_{\mathrm{lift}}, \widetilde{\cS})).
\end{array}
$$
The source terms are given by:
$$\begin{array}{rcl}
\widetilde{\mathcal F_\varepsilon} &=& \mathcal F_\varepsilon -\Re\, \left(\varepsilon^N \cR_{\mathrm{lift}}\cdot\nabla\cR_{\mathrm{lift}}+\mathcal L_{\varepsilon}^{(A)}(\cR_{\mathrm{lift}})\right)+ (1-r)\Delta \cR_{\mathrm{lift}}, \\ 
\widetilde{\mathcal G_\varepsilon} &=& \mathcal G_\varepsilon - \cS_{\mathrm{lift}} + \Di \Delta \cS_{\mathrm{lift}} - \We(\mathcal L_{\varepsilon}^{(B)}( \cR_{\mathrm{lift}})+ \mathcal L_{\varepsilon}^{(C)}( \cS_{\mathrm{lift}}))\\[0.1cm]
&&\hspace{3cm} -\varepsilon^N\, \We\, \left(\cR_{\mathrm{lift}} \cdot \nabla\cS_{\mathrm{lift}} + g_{a}(\nabla \cR_{\mathrm{lift}}, \cS_{\mathrm{lift}})\right).
\end{array}
$$

\begin{remark}\label{rem:estimateST}
From the Propositions~\ref{prop:estimateST} and~\ref{prop:estime-lift} we can obtain a bound on the new source terms:
$$
\|\widetilde{\mathcal F_\varepsilon}\|_{L^2} \leq C \varepsilon^{-1},\qquad  \|\widetilde{\mathcal G_\varepsilon} \|_{L^2} \leq C \varepsilon^{-1}.
$$
\end{remark}

\begin{theorem}\label{thm:liftremainder} The remainder satisfies:
$$
\|\nabla {\cR}\|_{L^2}+ \| {\cS}\|_{H^1} \leq C\, \varepsilon^{-1}.
$$
\end{theorem}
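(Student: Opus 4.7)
The plan is to derive the estimate by applying an energy method to the \emph{lifted} remainder $(\widetilde{\cR},\widetilde{Q},\widetilde{\cS})$, which satisfies a diffusive Oldroyd-type system with homogeneous boundary conditions, and then recover $\cR$ and $\cS$ by the triangle inequality using Proposition~\ref{prop:estime-lift}. The strategy mirrors the energy estimate underlying the existence result in Theorem~\ref{theorem}, but with source terms that scale as $\varepsilon^{-1}$ (from Remark~\ref{rem:estimateST}) and additional linear perturbations coming from the background elementary profiles.

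First I would test the momentum equation against $\widetilde{\cR}$ and the constitutive equation against $\tfrac{1}{2r}\widetilde{\cS}$. Thanks to $\mathrm{div}(\widetilde{\cR})=0$ and homogeneous Dirichlet data, the pressure term vanishes, the convective self-terms $\varepsilon^N\Re\, \widetilde{\cR}\cdot\nabla\widetilde{\cR}$ and $\varepsilon^N\We\,\widetilde{\cR}\cdot\nabla\widetilde{\cS}$ produce zero when tested as above, and the standard antisymmetric cancellation (combining $-\mathrm{div}(\widetilde{\cS})$ with $-2r\mathbb{D}(\widetilde{\cR})$ and using the structure of $g_a$) yields the coercive identity
\begin{equation*}
(1-r)\|\nabla\widetilde{\cR}\|_{L^2}^2+\tfrac{1}{2r}\|\widetilde{\cS}\|_{L^2}^2+\tfrac{\Di}{2r}\|\nabla\widetilde{\cS}\|_{L^2}^2 = \mathcal{I}_{\mathrm{lin}}+\mathcal{I}_{\mathrm{nl}}+\mathcal{I}_{\mathrm{source}},
\end{equation*}
where $\mathcal{I}_{\mathrm{source}}$ gathers the contributions of $\widetilde{\mathcal{F}_\varepsilon}$ and $\widetilde{\mathcal{G}_\varepsilon}$, $\mathcal{I}_{\mathrm{lin}}$ comes from the operators $\widetilde{\mathcal L^{(A)}_{\varepsilon}},\widetilde{\mathcal L^{(B)}_{\varepsilon}},\widetilde{\mathcal L^{(C)}_{\varepsilon}}$, and $\mathcal{I}_{\mathrm{nl}}$ contains the remaining $\varepsilon^N$-nonlinear contributions involving $g_a$.

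Second I would bound each contribution. For $\mathcal{I}_{\mathrm{source}}$, Cauchy--Schwarz together with Remark~\ref{rem:estimateST} yields a control by $C\varepsilon^{-1}(\|\nabla\widetilde{\cR}\|_{L^2}+\|\widetilde{\cS}\|_{L^2})$. For $\mathcal{I}_{\mathrm{lin}}$, one uses Hölder and Gagliardo--Nirenberg inequalities; the dangerous contributions are those involving gradients of the boundary-layer profiles $U_k(x,x/\varepsilon,y/\varepsilon)$ and $\Sigma_k(x,x/\varepsilon,y/\varepsilon)$, whose $L^\infty$ norm generates factors of $\varepsilon^{-1}$ but whose support is essentially concentrated in a strip of width $O(\varepsilon)$ near $\gamma_\varepsilon^{-}$: the exponential decay estimates of Propositions~\ref{prop:blp1} and~\ref{prop:wp2} combined with a Hardy--Poincaré inequality adapted to the rough strip make their $L^p$-contributions harmless, so that these terms can be absorbed into the coercive left-hand side (possibly up to an additive constant independent of $\varepsilon$). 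For $\mathcal{I}_{\mathrm{nl}}$, the prefactor $\varepsilon^N$ renders the contribution subcritical for $N$ large enough (or under the smallness assumptions of Theorem~\ref{theorem}), so it is absorbed by Young's inequality.

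Collecting all the bounds, Young's inequality yields
\begin{equation*}
\|\nabla\widetilde{\cR}\|_{L^2}+\|\widetilde{\cS}\|_{H^1}\leq C\,\varepsilon^{-1}.
\end{equation*}
Finally, since $\cR=\widetilde{\cR}+\cR_{\mathrm{lift}}$ and $\cS=\widetilde{\cS}+\cS_{\mathrm{lift}}$, Proposition~\ref{prop:estime-lift} and the triangle inequality deliver the stated estimate. The main obstacle is the second step: the linear operators $\widetilde{\mathcal L^{(A,B,C)}_{\varepsilon}}$ contain terms whose pointwise size is $\varepsilon^{-1}$, and only a careful use of the exponential decay of the correctors (forcing their mass to be confined in a strip of size $\varepsilon$) allows one to absorb them into the coercive part of the energy without losing the $\varepsilon^{-1}$ scaling expected for the remainder.
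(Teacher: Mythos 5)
Your plan up to and including the treatment of the linear and source terms is consistent with the paper's energy estimate (the paper tests the momentum equation against $2r\widetilde{\cR}$ and the constitutive equation against $\widetilde{\cS}$, which is proportional to your choice, and controls $\mathcal{I}_{\mathrm{lin}}$ and $\mathcal{I}_{\mathrm{source}}$ using H\"older, Sobolev embedding $H^1\subset L^4$, Poincar\'e, and smallness of the data). The final triangle-inequality step via Proposition~\ref{prop:estime-lift} is also as in the paper.

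However, there is a genuine gap in the way you dispose of $\mathcal{I}_{\mathrm{nl}}$. After testing, the dangerous term is
$$\We\,\varepsilon^N\int_{\omega_\varepsilon} g_a(\nabla\widetilde\cR,\widetilde\cS):\widetilde\cS \;\lesssim\; \We\,\varepsilon^N C_S\,\|\nabla\widetilde\cR\|_{L^2}\,\|\nabla\widetilde\cS\|_{L^2}^2,$$
which is \emph{cubic} in the unknowns. Young's inequality cannot absorb a cubic quantity into the quadratic coercive left-hand side; splitting $ab^2$ with Young leaves cubic remainders. The smallness assumptions of Theorem~\ref{theorem} constrain the data $f$, not the size of the (yet unknown) remainder, so they do not break the circularity either. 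To make the estimate close, one needs an a priori bound on one factor, and this is precisely why the paper replaces the direct energy estimate with a Picard-type iteration: it introduces the linearized system $(\mathrm{R}_{\mathrm{lin.}})$ in which the problematic factor $\nabla\widetilde\cR^n$ is frozen at the previous iterate, so the term becomes effectively quadratic in $(\widetilde\cR^{n+1},\widetilde\cS^{n+1})$ and can be absorbed provided $\We\,\varepsilon^{N-1}C_S C<\Di$ (uniformly along the iteration). The paper then proves the iterates form a Cauchy sequence in $H^1$ and identifies the limit with the actual lifted remainder by uniqueness. Without either this iteration or a carefully worded bootstrap/continuation argument in $\varepsilon$ (which you have not supplied), the phrase ``absorbed by Young's inequality'' does not justify the bound, and the proof does not close.

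A secondary, minor remark: your treatment of the $\varepsilon^{-1}$-sized background coefficients in $\widetilde{\mathcal L^{(A,B,C)}_\varepsilon}$ via a ``Hardy--Poincar\'e inequality adapted to the rough strip'' is not the route taken in the paper, which simply uses H\"older, $H^1\subset L^4$ and smallness of the data; your alternative may be viable but would need to be spelled out, since the paper's argument does not lean on localization of the boundary-layer profiles in a strip of width $O(\varepsilon)$.
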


\begin{proof}
Due to the relations between $(\cR,\cS,\cQ)$ and $(\widetilde\cR,\widetilde\cS,\widetilde\cQ)$
$$
\widetilde{\cR}:={\cR}-{\cR}_{\mathrm{lift}},\quad 
\widetilde{\cQ}:=Q, \quad  
\widetilde{\cS}:={\cS}-{\cS}_{\mathrm{lift}},
$$
using the Proposition~\ref{prop:estime-lift}, it suffices to analyze the error on~$(\widetilde\cR,\widetilde\cS,\widetilde\cQ)$.

The estimate is then obtained using a classical energy estimate on the system satisfied by~$(\widetilde\cR,\widetilde\cS,\widetilde\cQ)$.
More precisely, we first take the scalar product in~$L^2(\omega_\varepsilon)$ of the momentum equation by $2r\, \widetilde\cR$. Next we take the scalar product in~$L^2(\omega_\varepsilon)$ of the constitutive equation by~$\widetilde\cS$. We finally add the results to obtain
\begin{equation}\label{estimate0}
2r(1-r) \int_\omega \| \nabla \widetilde{\cR} \|^2 + \int_\omega \| \widetilde\cS \|^2 + \Di \int_\omega \|\nabla \widetilde\cS \|^2
= \text{RHS}.
\end{equation}
The term RHS is composed as follow:
$$
\begin{aligned}
RHS = &
- 2r \int_{\omega_\varepsilon} \widetilde{\mathcal L_{\varepsilon}^{(A)}}(\widetilde{\cR}) \cdot \widetilde{\cR}
+ r \int_{\omega_\varepsilon} \widetilde{\mathcal F_\varepsilon} \cdot \widetilde{\cR}
- \We\, \varepsilon^N \int_\omega g_a(\nabla \widetilde{\cR} , \widetilde\cS) : \widetilde\cS \\
& - \int_{\omega_\varepsilon} \widetilde{\mathcal L_{\varepsilon}^{(B)}}(\widetilde{\cR}) \cdot \widetilde{\cS}
- \int_{\omega_\varepsilon} \widetilde{\mathcal L_{\varepsilon}^{(C)}}(\widetilde{\cS}) \cdot \widetilde{\cS}
+ \int_{\omega_\varepsilon} \widetilde{\mathcal G_\varepsilon} \cdot \widetilde{\cS}.
\end{aligned}
$$
It is not very difficult to show that the source terms and linear terms of RHS can be controlled by the terms on the left-hand side of the estimate~\eqref{estimate0}. However the quadratic term does not lead in a straightforward way to a suitable estimate. Therefore we have to consider a new argument which is based on a fixed point procedure. Let us consider the following linearized system, denoted $(\mathrm R_{\mathrm{lin.}})$:
$$\begin{array}{rcl}
\varepsilon^N\, \Re\, \widetilde{\cR}^n \cdot \nabla \widetilde{\cR}^{n+1} + \widetilde{\mathcal L_{\varepsilon}^{(A)}}(\widetilde{\cR}^{n+1})- (1-r)\Delta \widetilde{\cR}^{n+1} + \nabla \widetilde{Q}^{n+1} \\
= \div(\widetilde{\cS}^{n+1})+ \widetilde{\mathcal F_\varepsilon},
\end{array}
$$
$$\begin{array}{r}
\varepsilon^N\, \We\, \left( \widetilde{\cR}^n \cdot \nabla \widetilde{\cS}^{n+1} + g_a(\nabla \widetilde{\cR}^n, \widetilde{\cS}^{n+1}) \right) + \widetilde{\mathcal L_{\varepsilon}^{(B)}}(\widetilde{\cR}^{n+1}) + \widetilde{\mathcal L_{\varepsilon}^{(C)}}(\widetilde{\cS}^{n+1})\\ 
+ \widetilde{\cS}^{n+1} - \Di \Delta \widetilde{\cS}^{n+1} =2r \D(\widetilde{\cR}^{n+1})+ \widetilde{\mathcal G_\varepsilon}, 
\end{array}
$$
and
$$\begin{array}{rcl}
\div \widetilde{\cR}^{n+1}&=&0.
\end{array}
$$
where $(\widetilde{\cR}^n,\widetilde{\cS}^n)$ are given. The idea relies on the following arguments:
\begin{enumerate}
\item we show that $(\widetilde{\cR}^n, \widetilde{\cS}^n)_{n}$ is bounded in $H^1(\omega_{\varepsilon})$, up to smallness assumptions~;
\item by the Cauchy criterion, we show that the sequence $(\widetilde{\cR}^n, \widetilde{\cS}^n)_{n}$ converges in $H^1(\omega_{\varepsilon})$~;
\item we let $n$ tend to $+\infty$ and show that the limit of $(\widetilde{\cR}^n, \widetilde{\cS}^n)_{n}$ is the solution of the system satisfied by the remainder. The limit still satisfies the estimates of step~1.
\end{enumerate}

{\bf Step 1.} Using the classical energy estimate, we have
\begin{equation*}
2r(1-r) \int_\omega \| \nabla \widetilde{\cR}^{n+1} \|^2 + \int_\omega \| \widetilde\cS^{n+1} \|^2 + \Di \int_\omega \|\nabla \widetilde\cS^{n+1} \|^2
= \text{RHS}^{(n)}.
\end{equation*}
The term $RHS^{(n)}$ is composed as follow:
$$
\begin{aligned}
RHS^{(n)} = &
- 2r \int_{\omega_\varepsilon} \widetilde{\mathcal L_{\varepsilon}^{(A)}}(\widetilde{\cR}^{n+1}) \cdot \widetilde{\cR}^{n+1}
+ r \int_{\omega_\varepsilon} \widetilde{\mathcal F_\varepsilon} \cdot \widetilde{\cR}^{n+1}\\
& -\displaystyle \We\, \varepsilon^N \int_\omega g_a(\nabla \widetilde{\cR}^n , \widetilde\cS^{n+1}) : \widetilde\cS^{n+1} \\
& - \int_{\omega_\varepsilon} \widetilde{\mathcal L_{\varepsilon}^{(B)}}(\widetilde{\cR}^{n+1}) \cdot \widetilde{\cS}^{n+1}
- \int_{\omega_\varepsilon} \widetilde{\mathcal L_{\varepsilon}^{(C)}}(\widetilde{\cS}^{n+1}) \cdot \widetilde{\cS}^{n+1}\\
& + \int_{\omega_\varepsilon} \widetilde{\mathcal G_\varepsilon} \cdot \widetilde{\cS}^{n+1}.
\end{aligned}
$$
We distinguish three types of terms:
\begin{itemize}
\item {\em source terms} $$ r \int_{\omega_\varepsilon} \widetilde{\mathcal F_\varepsilon} \cdot \widetilde{\cR}^{n+1}+ \int_{\omega_\varepsilon} \widetilde{\mathcal G_\varepsilon} \cdot \widetilde{\cS}^{n+1}$$
Using Cauchy-Schwarz, Poincar\'e and Young inequalities, we have
$$\begin{array}{rcl}
\left| r \displaystyle\int_{\omega_\varepsilon} \widetilde{\mathcal F_\varepsilon} \cdot \widetilde{\cR}^{n+1}\right| &\leq& r \| \widetilde{\mathcal F_\varepsilon}\|_{L^2(\omega_{\varepsilon})} \| \widetilde{\cR}^{n+1}\|_{L^2(\omega_{\varepsilon})}\\
&\leq& r C_{P} \| \widetilde{\mathcal F_\varepsilon}\|_{L^2(\omega_{\varepsilon})} \| \nabla \widetilde{\cR}^{n+1}\|_{L^2(\omega_{\varepsilon})}\\ 
&\leq& r(1-r) \| \nabla \widetilde{\cR}^{n+1}\|_{L^2(\omega_{\varepsilon})}^2+\displaystyle\frac{r}{4(1-r)}C_{P}^2 \| \widetilde{\mathcal F_\varepsilon}\|_{L^2(\omega_{\varepsilon})}^2
\end{array}$$
As a matter of fact, $r(1-r) \| \nabla \widetilde{\cR}^{n+1}\|_{L^2(\omega_{\varepsilon})}^2$ can be absorbed by the heft-hand side of the energy estimate. The other source term can be treated in a very similar way. 

\item {\em linear terms} $$- 2r \int_{\omega_\varepsilon} \widetilde{\mathcal L_{\varepsilon}^{(A)}}(\widetilde{\cR}^{n+1}) \cdot \widetilde{\cR}^{n+1}- \int_{\omega_\varepsilon} \widetilde{\mathcal L_{\varepsilon}^{(B)}}(\widetilde{\cR}^{n+1}) \cdot \widetilde{\cS}^{n+1}
- \int_{\omega_\varepsilon} \widetilde{\mathcal L_{\varepsilon}^{(C)}}(\widetilde{\cS}^{n+1}) \cdot \widetilde{\cS}^{n+1}.$$
Conventional arguments are the H\"older inequality, the Sobolev injections like $H^1(\omega_\varepsilon) \subset L^4(\omega_\varepsilon)$, with constant denoted~$C_S$, and Young's inequality. For instance, the first term of~$\widetilde{\mathcal L_{\varepsilon}^{(A)}}(\widetilde{\cR}^{n+1})$, that is~$\widetilde{\cR}^{n+1} \cdot \nabla u_0$, can be treated as follows:
$$
\begin{aligned}
\Big| 2r \int_{\omega_\varepsilon} ( \widetilde{\cR}^{n+1} \cdot \nabla u_0 ) \cdot \widetilde{\cR}^{n+1} \Big|
& \leq 2r \, \|\widetilde{\cR}^{n+1}\|_{L^4(\omega_\varepsilon)}^2  \|\nabla u_0\|_{L^2(\omega_\varepsilon)} \\
& \leq 2r \, C_S \, \|\nabla \widetilde{\cR}^{n+1}\|_{L^2(\omega_\varepsilon)}^2  \|\nabla u_0\|_{L^2(\omega_\varepsilon)}.
\end{aligned}
$$
Under smallness assumptions on $u_{0}$ (and therefore on the data of the initial problem), the right-hand side of the above inequality can be absorbed by the left-hand side of the energy estimate. The other linear terms can be treated in a very similar way.

\item {\em quadratic terms}
$$
- \We\, \varepsilon^N \int_\omega g_a(\nabla \widetilde{\cR}^{n} , \widetilde\cS^{n+1}) : \widetilde\cS^{n+1}.
$$
Using the inequality
$$
\begin{array}{l}
\Big| \We\, \varepsilon^N \displaystyle\int_\omega g_a(\nabla \widetilde{\cR}^n , \widetilde\cS^{n+1}) : \widetilde\cS^{n+1} \Big| \\
\hspace{2cm} \leq \We\, \varepsilon^N \|\nabla \widetilde{\cR}^n\|_{L^2(\omega_\varepsilon)}  \|\widetilde{\cS}^{n+1}\|_{L^4(\omega_\varepsilon)}^2 \\
\hspace{2cm} \leq  \We\, \varepsilon^N \, C_S \|\nabla \widetilde{\cR}^n\|_{L^2(\omega_\varepsilon)}  \|\nabla \widetilde{\cS}^{n+1}\|_{L^2(\omega_\varepsilon)}^2\\
\end{array}
$$
for $\varepsilon$ sufficiently small (namely $\We\, \varepsilon^N \, C_S \|\nabla \widetilde{\cR}^n\|_{L^2(\omega_\varepsilon)} < \Di$), then the right-hand side of the above inequality can be absorbed by the left-hand side of the energy estimate.
\end{itemize}
From the above considerations, we deduce the following estimate
$$
 \| \nabla \widetilde{\cR}^{n+1} \|_{L^2(\omega_{\varepsilon})}^2 +  \| \widetilde{\cS}^{n+1} \|_{H^1(\omega_{\varepsilon})}^2\leq C  \|  \widetilde{\mathcal F_\varepsilon} \|_{L^2(\omega_{\varepsilon})}^2+C \|  \widetilde{\mathcal G_\varepsilon} \|_{L^2(\omega_{\varepsilon})}^2.
$$
By Remark~\ref{rem:estimateST}, we obtain
\begin{equation}\label{eq:esti1}
 \| \nabla \widetilde{\cR}^{n+1} \|_{L^2(\omega_{\varepsilon})}^2 +  \| \widetilde{\cS}^{n+1} \|_{H^1(\omega_{\varepsilon})}^2\leq C \varepsilon^{-2}.
\end{equation}
Finally, in order to get the induction step on $n$, it is sufficient to guarantee that $\We\, \varepsilon^{N-2} \, C_S C < \Di$ to get the uniform estimate on $( \widetilde{\cR}^{n} , \widetilde{\cS}^{n})$.
Note also that this condition is satisfied if~$\varepsilon$ is small enough. 

{\bf Step 2.}  We prove that $(\widetilde{\cR}^n, \widetilde{\cS}^n)_{n}$ is a Cauchy sequence in $H^1(\omega_{\varepsilon})$. Introducing
$$
\widehat{\cR}^{n+1}:=\widetilde{\cR}^{n+1}-\widetilde{\cR}^n,\qquad \widehat{\cS}^{n+1}:=\widetilde{\cS}^{n+1}-\widetilde{\cS}^n,\qquad \widehat{\cQ}^{n+1}:=\widetilde{\cQ}^{n+1}-\widetilde{\cQ}^n,
$$
we get by subtraction in $(\mathrm R_{\mathrm{lin.}})$
$$\begin{array}{r}
\varepsilon^N\, \Re\, (\widetilde{\cR}^n \cdot \nabla \widehat{\cR}^{n+1} +\widehat{\cR}^n \cdot \nabla \widetilde{\cR}^{n})+ \widetilde{\mathcal L_{\varepsilon}^{(A)}}(\widehat{\cR}^{n+1})- (1-r)\Delta \widehat{\cR}^{n+1} + \nabla \widehat{Q}^{n+1} \\
= \div(\widehat{\cS}^{n+1}),\\
\end{array}
$$
$$\begin{array}{r}
\varepsilon^N\, \We\, \left( \widetilde{\cR}^n \cdot \nabla \widehat{\cS}^{n+1}+\widehat{\cR}^n \cdot \nabla \widetilde{\cS}^{n} + g_a(\nabla \widetilde{\cR}^n, \widehat{\cS}^{n+1})+g_a(\nabla \widehat{\cR}^n, \widetilde{\cS}^{n}) \right) \\
+ \widetilde{\mathcal L_{\varepsilon}^{(B)}}(\widehat{\cR}^{n+1}) + \widetilde{\mathcal L_{\varepsilon}^{(C)}}(\widehat{\cS}^{n+1}) + \widehat{\cS}^{n+1} - \Di \Delta \widehat{\cS}^{n+1} = 2r \D(\widehat{\cR}^{n+1})
\end{array}
$$
and
$$\begin{array}{r}
\div \widehat{\cR}^{n+1}=0.
\end{array}
$$
Performing an energy estimate consists, again, in multiplying the first equation by $2r \widehat{\cR}^{n+1}$ then integrate over $\omega_{\varepsilon}$, multiplying the second equation by $\widehat{\cS}^{n+1}$ then integrate over $\omega_{\varepsilon}$ and sum up the two contributions. We use the following estimates:
\begin{itemize}
\item[$\bullet$] The first contributions is easily controlled, as $$\left|\varepsilon^N\, \Re\, \displaystyle\int_{\omega_{\varepsilon}}(\widetilde{\cR}^n \cdot \nabla \widehat{\cR}^{n+1})\cdot \widehat{\cR}^{n+1}\right|=0.$$
\item[$\bullet$] The second contribution satisfies:
$$\begin{array}{l}
\left|\varepsilon^N\, \Re\, \displaystyle\int_{\omega_{\varepsilon}}(\widehat{\cR}^n \cdot \nabla \widetilde{\cR}^{n}))\cdot \widehat{\cR}^{n+1}\right|\\
\hspace{1cm}\leq \varepsilon^N\, \Re  \| \widehat{\cR}^{n} \|_{L^4(\omega_{\varepsilon})}  \| \nabla \widetilde{\cR}^{n} \|_{L^2(\omega_{\varepsilon})}  \| \widehat{\cR}^{n+1} \|_{L^4(\omega_{\varepsilon})}\\
\hspace{1cm}\leq C \varepsilon^{N-1}  \| \nabla\widehat{\cR}^{n} \|_{L^2(\omega_{\varepsilon})} \| \nabla\widehat{\cR}^{n+1} \|_{L^2(\omega_{\varepsilon})}\\
\hspace{1cm}\leq r(1-r)  \| \nabla \widehat{\cR}^{n+1} \|_{L^2(\omega_{\varepsilon})}^2 + \frac{C}{4r(1-r)} \varepsilon^{2N-2}  \| \nabla\widehat{\cR}^{n} \|_{L^2(\omega_{\varepsilon})}^2,
\end{array}$$
where we have used the estimate established in Eq.~\eqref{eq:esti1} and then Sobolev imbedding and Young's inequality.
\item[$\bullet$] The third contribution satisfies
$$\left| \displaystyle\int_{\omega_{\varepsilon}} \widetilde{\mathcal L_{\varepsilon}^{(A)}}(\widehat{\cR}^{n+1})\cdot \widehat{\cR}^{n+1}\right| \leq  ||| \widetilde{\mathcal L_{\varepsilon}^{(A)}}|||\, \| \nabla\widehat{\cR}^{n+1} \|_{L^2(\omega_{\varepsilon})}^2,$$ 
where $||| \cdot |||$ denotes the operator norm from $L^2(\omega_{\varepsilon})$ to itself. Recalling the expression of $ \widetilde{\mathcal L_{\varepsilon}^{(A)}}$, it can be shown that $||| \widetilde{\mathcal L_{\varepsilon}^{(A)}}|||$ is arbitrarily small for sufficiently small data or $\varepsilon$.
\item[$\bullet$] The other contributions can be treated in a straightforward way or by arguments similar to the previous ones.
\end{itemize}
We thus deduce that, under smallness assumptions on the data and $\varepsilon$,
$$
 \| \nabla \widehat{\cR}^{n+1} \|_{L^2(\omega_{\varepsilon})}^2 +  \| \widehat{\cS}^{n+1} \|_{H^1(\omega_{\varepsilon})}^2\leq C \varepsilon^{2N-2} ( \| \nabla \widehat{\cR}^{n} \|_{L^2(\omega_{\varepsilon})}^2 +  \| \widehat{\cS}^{n} \|_{H^1(\omega_{\varepsilon})}^2).
$$ 
It means in particular that $(\widetilde{\cR}^{n+1}-\widetilde{\cR}^{n},\widetilde{\cS}^{n+1}-\widetilde{\cS}^{n})$ tends to $0$ as $n$ goes to $+\infty$. Consequently, $(\widetilde{\cR}^{n},\widetilde{\cS}^{n})$ is a Cauchy sequence in $H^1(\omega_{\varepsilon})$. The sequence converges to some $(\widetilde{\cR}^\star,\widetilde{\cS}^\star)$ in $H^1(\omega_{\varepsilon})$ which, by means of construction, satisfies the following estimate:
\begin{equation}\label{eq:esti1b}
 \| \nabla \widetilde{\cR}^{\star} \|_{L^2(\omega_{\varepsilon})}^2 +  \| \widetilde{\cS}^{\star} \|_{H^1(\omega_{\varepsilon})}^2\leq C \varepsilon^{-2}.
\end{equation}

{\bf Step 3.} Letting $n$ tend to $+\infty$ we come to the conclusion that $(\widetilde{\cR}^\star,\widetilde{\cS}^\star)$ is the unique solution of the system satisfied by the lifted remainder. Therefore, the lifted remainder satisfies:
$$
\|\nabla \widetilde{\cR}\|_{L^2}+ \| \widetilde{\cS}\|_{H^1} \leq C\, \varepsilon^{-1}
$$
which concludes the proof.
\end{proof}

\begin{corollary} The asymptotic expansion is valid at any order.\end{corollary}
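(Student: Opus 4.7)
The plan is to derive the corollary as an immediate consequence of Theorem~\ref{thm:liftremainder}. By the very definition of the remainder triple $(\cR,\cQ,\cS)$ at the beginning of Section~\ref{Section5}, for any fixed truncation order $N$ we have
\begin{equation*}
u - \sum_{j=0}^{N} \varepsilon^j \Big[ u_j + U_j\big(\cdot,\tfrac{\cdot}{\varepsilon}\big) \Big] = \varepsilon^N \cR, \qquad \sigma - \sum_{j=0}^{N} \varepsilon^j \Big[ \sigma_j + \Sigma_j\big(\cdot,\tfrac{\cdot}{\varepsilon}\big) \Big] = \varepsilon^N \cS,
\end{equation*}
so that controlling the error of the truncated ansatz reduces to controlling $\varepsilon^N \cR$, $\varepsilon^N \cS$ (and, by the same token, $\varepsilon^N \cQ$ for the pressure) in suitable norms.

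Next I would simply multiply the estimate of Theorem~\ref{thm:liftremainder} by $\varepsilon^N$: combining $\|\nabla \cR\|_{L^2(\omega_\varepsilon)} + \|\cS\|_{H^1(\omega_\varepsilon)} \leq C \varepsilon^{-1}$ with the Poincar\'e inequality on $\omega_\varepsilon$ (applicable to $\cR$ up to the exponentially small boundary default $\mathcal D_\varepsilon^{(\pm)}$, which has already been absorbed through the lift procedure), one gets
\begin{equation*}
\Big\|u - \sum_{j=0}^{N} \varepsilon^j (u_j + U_j)\Big\|_{H^1(\omega_\varepsilon)} + \Big\|\sigma - \sum_{j=0}^{N} \varepsilon^j (\sigma_j + \Sigma_j)\Big\|_{H^1(\omega_\varepsilon)} \leq C_N\, \varepsilon^{N-1},
\end{equation*}
where $C_N$ depends on $N$ (through the elementary solutions entering the source terms $\mathcal F_\varepsilon$ and $\mathcal G_\varepsilon$) but not on $\varepsilon$.

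Finally, since $N \in \mathbb{N}$ is arbitrary, the right-hand side vanishes at any prescribed power of $\varepsilon$: to obtain an $H^1$ approximation of order $\varepsilon^M$, it suffices to truncate the ansatz at order $N = M+1$, which is exactly the meaning of ``valid at any order''. The only mildly delicate point, not a genuine obstacle, is that the smallness conditions of Theorem~\ref{thm:liftremainder} (on the data and on $\varepsilon$) depend on $N$, so the corollary should be read as: for each $N$ there exists $\varepsilon_0(N)>0$ such that the estimate above holds for all $\varepsilon<\varepsilon_0(N)$. All the real work has been done in Theorem~\ref{thm:liftremainder}; the corollary itself is a one-line bookkeeping reformulation of that estimate in terms of the truncation error.
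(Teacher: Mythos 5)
Your argument is correct and is precisely the unstated bookkeeping that the paper intends: the paper gives no separate proof for this corollary, treating it as an immediate consequence of Theorem~\ref{thm:liftremainder}, and your proof spells out exactly why. The two refinements you add — recovering $\|\cR\|_{H^1}$ from $\|\nabla\cR\|_{L^2}$ via Poincar\'e on the lifted part plus Proposition~\ref{prop:estime-lift} for $\cR_{\mathrm{lift}}$, and the caveat that both the constant $C_N$ and the smallness threshold on $\varepsilon$ depend on the truncation order $N$ — are genuine and worth making explicit, since the paper leaves them implicit.
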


\section{Algorithm and numerical procedure}\label{Section6}

We show in this section that it is possible to effectively evaluate all the contributions of the ansatz. The only difficulty is to prove that the solution of each problem can be built using the only previous elementary solutions. For instance, let us prove that $(u_k,p_k,\sigma_k)$, which is the solution of the problem~$pb^{(k)}$, only depends on $(u_j,p_j,\sigma_j)$, $j<k$ and on $(U_j,P_j,\Sigma_j)$, $j\leq k$.
This is not so obvious since the problem~$pb^{(k)}$ calls for the use of a parameter~$a_k$, which seems to be related to a forthcoming problem~$PB^{(k+2)}$ through the relationship
$$
a_k = \lim_{Y\to +\infty}\int_{\T^d} F_{k+2}(X,Y)\, \mathrm dX.
$$
However we know prove that the definition of~$a_k$ is a consistent:
\begin{proposition}\label{prop:coeffcomput1} Coefficients $a_{k}$ (see the definition of $\mathrm{pb}^{(k)}$ and $\mathrm{PB}^{(k+2)}$) satisfying Eq.~\eqref{eq:ak} can be written as
$$
\begin{aligned}
a_k = \lim_{Y\to +\infty}\int_{\T^d} \Big[  & - \Re \, u_0 \cdot \nabla_x U_k + (1-r) \Delta_x U_{k} \\
& -\Re \sum_{i=1}^{k-1} u_i \cdot \nabla_x U_{k-i}  - \Re \sum_{i=1}^k U_i\cdot ( \nabla_x U_{k-i} + \nabla u_{k-i}) \Big].
\end{aligned}
$$
\end{proposition}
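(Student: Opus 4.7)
The plan starts from the relation
$$a_k=\lim_{Y\to+\infty}\int_{\T^d}F_{k+2}(X,Y)\,\mathrm dX,$$
which is~\eqref{eq:ak} shifted by~$2$. I would substitute the explicit formula for $F_{k+2}$ given in the definition of $\mathrm{PB}^{(k+2)}$ and, thanks to two simple reduction mechanisms, identify which contributions survive the averaging over~$\T^d$ and the limit $Y\to+\infty$. The first mechanism is $\T^d$-periodicity of the boundary-layer quantities in the microscopic variable~$X$, which makes $\int_{\T^d}\partial_{X_\ell}\Phi\,\mathrm dX=0$ for every smooth periodic $\Phi$. The second is the exponential decay, through polynomial factors $Q(Y)$, granted by Propositions~\ref{prop:blp1}--\ref{prop:wp2}, which applied inductively to $\mathrm{PB}^{(j)}$ for all $j\leq k+1$ yields
$$\int_{\T^d}\Sigma_j\,\mathrm dX\longrightarrow 0,\qquad \int_{\T^d}P_j\,\mathrm dX\longrightarrow 0,\qquad \int_{\T^d}\bigl(U_j-U_{j,\infty}\bigr)\,\mathrm dX\longrightarrow 0,$$
as $Y\to+\infty$, together with the analogous convergences for the $Y$-derivatives; in particular $\int_{\T^d}\nabla U_j\,\mathrm dX\to 0$ because the limit $U_{j,\infty}(x)$ does not depend on $(X,Y)$.

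Applying these two mechanisms to the explicit formula for $F_{k+2}$, I would eliminate in turn: all microscopic $\partial_{X_\ell}$-pieces of $\div(\Sigma_{k+1})$, and its $\partial_Y$-piece through the decay of $\int_{\T^d}\Sigma_{k+1}\,\mathrm dX$; the cross-term $2(1-r)\nabla_x\cdot\nabla U_{k+1}$ identically (its only microscopic derivative is a $\partial_{X_\ell}$); the macroscopic blocks $\div_x\Sigma_k$ and $\nabla_x P_{k+1}$ because their $\T^d$-averages vanish in the limit; and every $(u_i+U_i)\cdot\nabla U_{k+1-i}$ in the convective sum because $\int_{\T^d}\nabla U_{k+1-i}\,\mathrm dX\to 0$. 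What is left, inside $\int_{\T^d}\cdot\,\mathrm dX$ followed by $Y\to+\infty$, is
$$(1-r)\Delta_x U_k-\Re\sum_{i=0}^{k}\bigl((u_i+U_i)\cdot\nabla_x U_{k-i}+U_i\cdot\nabla u_{k-i}\bigr).$$
Since the ansatz contains no order-zero boundary-layer corrector, $U_0\equiv 0$; hence the $i=0$ summand reduces to $-\Re\, u_0\cdot\nabla_x U_k$, the $i=k$ summand reduces to $-\Re\, U_k\cdot\nabla u_0$, and by separating these two boundary indices from the intermediate range $1\leq i\leq k-1$ one recovers exactly the expression stated in the proposition.

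The only real obstacle in this program is a commutation issue: one has to justify that the limit $Y\to+\infty$ passes under $\int_{\T^d}\cdot\,\mathrm dX$ and under the macroscopic derivatives $\nabla_x,\Delta_x$ that act on the elementary solutions inside $F_{k+2}$. This is granted by dominated convergence together with the uniform bounds $Q(Y)\mathrm e^{-Y}$ of Propositions~\ref{prop:blp1}--\ref{prop:wp2}, which transfer to every $x$-derivative of $U_j,P_j,\Sigma_j$: the source terms of the elementary boundary-layer problems depend only on the macroscopic solutions $(u_j,p_j,\sigma_j)$, whose $x$-regularity is granted by Theorem~\ref{theorem}, and on quantities of lower orders that are controlled by induction. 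Once this is settled, the resulting expression for $a_k$ involves only $(u_j,p_j,\sigma_j)$ for $j\leq k-1$ and $(U_j,P_j,\Sigma_j)$ for $j\leq k$, so that the definition of $a_k$ is consistent with the order in which the elementary problems are solved.
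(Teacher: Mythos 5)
Your argument is correct and follows essentially the same route as the paper: substitute the explicit formula for $F_{k+2}$ from $\mathrm{PB}^{(k+2)}$ into $a_k=\lim_{Y\to+\infty}\int_{\T^d}F_{k+2}\,\mathrm dX$, then kill terms via $X$-periodicity of $\partial_{X_\ell}$ and via the exponential decay furnished by Propositions~\ref{prop:blp1}--\ref{prop:wp2}; the paper merely packages the same calculation by writing $F_{k+2}=F_{k+2}^A+F_{k+2}^B$, where $F_{k+2}^B$ collects the forward-referencing pieces ($u_k,U_{k+1},P_{k+1},\Sigma_{k+1}$) and is shown to vanish in the limit, after which the surviving part of $F_{k+2}^A$ gives the stated expression. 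One small imprecision worth fixing: to discard the convective terms $U_i\cdot\nabla U_{k+1-i}$ you cannot pull $U_i$ out of the $X$-integral, so the stated reason ``$\int_{\T^d}\nabla U_{k+1-i}\,\mathrm dX\to 0$'' is not quite enough — you need the \emph{pointwise} exponential decay of $\nabla U_{k+1-i}$ (together with boundedness of $U_i$), which Proposition~\ref{prop:blp1} does provide.
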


\begin{proof}
The source contribution~$F_{k+2}$ can be written as $F_{k+2}^A + F_{k+2}^B$ with
\begin{equation*}
\begin{aligned}
F_{k+2}^A = & \Big[  - \Re \, u_0 \cdot \nabla_x U_k + (1-r) \Delta_x U_{k} + \div_x (\Sigma_{k}) \\
& -\Re \sum_{i=1}^{k-1} u_i\cdot ( \nabla U_{k+1-i} + \nabla_x U_{k-i} ) \\
& - \Re \sum_{i=1}^k U_i\cdot ( \nabla U_{k+1-i} + \nabla_x U_{k-i} + \nabla u_{k-i}) \Big], \\
F_{k+2}^B = & -\Re \, u_0\cdot \nabla U_{k+1} -\Re \, u_k\cdot \nabla U_1 + \div (\Sigma_{k+1}) \\
& \quad + 2(1-r) \nabla_x \cdot \nabla U_{k+1} - \nabla_x P_{k+1}.
\end{aligned}
\end{equation*}
The first contribution~$F_{k+2}^A$ using the already defined elementary solutions ($(u_j,p_j,\sigma_j)$, $j<k$ and $(U_j,P_j,\Sigma_j)$, $j\leq k$), whereas the second ones~$F_{k+2}^B$ using elementary solutions of problem~$pb^{(k)}$ and~$PB^{(k+1)}$.\\

We now prove that $\displaystyle \lim_{Y\to +\infty}\int_{\T^d} F_{k+2}^B = 0$. For instance, we treat the first contribution (the four other contributions are similarly treated)
\begin{equation*}
\begin{aligned}
u_0\cdot \nabla U_{k+1} = \sum_{\ell=1}^d u_0^{(\ell)} \partial_{X_\ell} U_{k+1} - u_0^{(d+1)} \partial_{Y} U_{k+1}.
\end{aligned}
\end{equation*}
Taking the $X$-average, using the periodicity we obtain
\begin{equation*}
\begin{aligned}
\int_{\T^d} u_0\cdot \nabla U_{k+1} = u_0^{(d+1)} \partial_{Y} \Big( \int_{\T^d} U_{k+1} \Big).
\end{aligned}
\end{equation*}
Due to the behavior of the mean value $\int_{\T^d} U_{k+1}$ (see the Proposition~\ref{prop:blp1}), we have
$$\lim_{Y\to +\infty}\int_{\T^d} u_0\cdot \nabla U_{k+1} = 0.$$
To conclude the proof, it suffices to note that some contributions of~$F_{k+2}^A$ vanish too.
\end{proof}

The problem~$pb^{(k)}$ calls for the use of a parameter~$b_k$, which seem to be related to a forthcoming problem~$PB^{(k+1)}$ through the relationship
$$
b_{k}=\int_{\T^d} N_{k+1}(X)\, \mathrm dX - \Di^{-1} \int_{Y<0} G_{k+1}(X,Y)\, \mathrm dX\, \mathrm dY.
$$
However we know prove that the definition of~$b_k$ is a consistent:
\begin{proposition}\label{prop:coeffcomput2}
Coefficients $b_k $ (see the definition of $\mathrm{pb}^{(k)}$ and $\mathrm{PB}^{(k+1)}$) satisfying Eq.~\eqref{eq:bk} only depend on the elementary solutions $(u_j,p_j,\sigma_j)_{j<k}$ and $(U_j,P_j,\Sigma_j)_{j\leq k}$.
\end{proposition}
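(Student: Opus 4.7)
The argument parallels the one just given for Proposition~\ref{prop:coeffcomput1}: starting from formula~\eqref{eq:bk}, I would track which previously constructed elementary solutions appear in $N_{k+1}$ and $G_{k+1}$, and then invoke $\T^d$-periodicity of $H$ to kill the unique term that would otherwise require knowing $\sigma_k$.

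First I would read off from the shifted definitions the dependencies of $G_{k+1}$ and $N_{k+1}$. Direct inspection of $G_k$ (with $k\to k+1$) shows that $G_{k+1}$ involves only $U_j,\Sigma_j$ for $j\leq k$ and $u_j,\sigma_j$ for $j\leq k-1$; no pressure, and in particular neither $u_k$ nor $\sigma_k$, appears. Hence $\Di^{-1}\int_{Y<0}G_{k+1}\,\mathrm dX\,\mathrm dY$ is automatically expressible in the allowed data. Similarly, $N_{k+1}$ splits into three contributions: $(\nabla H\cdot\nabla_x)\sigma_k|_{\gamma^-_0}$, $(\nabla H\cdot\nabla_x)\Sigma_k$, and the sum
\[
\sum_{p=1}^{k}\frac{(-1)^p}{p!}H^p\partial_y^{(p)}\bigl((\nabla H\cdot\nabla_x)\sigma_{k-p}+\partial_y\sigma_{k-p}\bigr)\big|_{\gamma^-_0}.
\]
The middle term involves the permitted $\Sigma_k$, and the sum only involves $\sigma_{k-p}$ for $p\geq 1$, i.e. indices $\leq k-1$; so these are fine as well. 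The only obstruction is therefore the very first term, which depends on the yet-undetermined $\sigma_k$.

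The key step, and the only genuine computation in the proof, is to observe that $\sigma_k$ depends only on the macroscopic variable $x$, so that $\nabla_x\sigma_k(x,0)$ factors out of the $X$-integral:
\[
\int_{\T^d}(\nabla H(X)\cdot\nabla_x)\sigma_k(x,0)\,\mathrm dX = \nabla_x\sigma_k(x,0)\cdot\int_{\T^d}\nabla H(X)\,\mathrm dX = 0,
\]
by $\T^d$-periodicity of $H$. Consequently $b_k$ given by~\eqref{eq:bk} admits an explicit expression involving only $(u_j,p_j,\sigma_j)_{j<k}$ and $(U_j,P_j,\Sigma_j)_{j\leq k}$, as announced. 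I do not anticipate any serious obstacle: the proof amounts to one cancellation via periodicity of $H$ together with careful bookkeeping of the summation ranges in the definitions of $G_k$ and $N_k$, the only mildly subtle point being to check that none of the inner sums of $G_{k+1}$ secretly drags in a $u_k$ or $\sigma_k$ factor, which is immediate from the index ranges displayed.
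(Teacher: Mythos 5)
Your proof is correct and follows essentially the same approach as the paper's (very terse) argument: check that $G_{k+1}$ involves only admissible elementary solutions, isolate the single obstruction term in $N_{k+1}$, and kill it via periodicity of $H$. You correctly identify the obstruction as $(\nabla H\cdot\nabla_x)\sigma_k|_{\gamma^-_0}$; note that the paper's proof text misprints this as $\sigma_{k-1}$, but the displayed formula for $N_k$ (shifted to $k+1$) shows it is indeed $\sigma_k$, and your explicit computation $\int_{\T^d}\nabla H\,\mathrm dX=0$ is the missing detail behind the paper's bare assertion that the $X$-average vanishes.
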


\begin{proof}
The source contribution~$G_{k+1}$ only depends on the already defined elementary solutions ($(u_j,p_j,\sigma_j)$, $j<k$ and $(U_j,P_j,\Sigma_j)$, $j\leq k$).
Only one term in the boundary contribution~$N_{k+1}$ does not depend on these elementary solution: $N_{k+1}^B = (\nabla H \cdot \nabla_x) \sigma_{k-1}|_{\gamma^-_0}$.
Nevertheless, its $X$-average is equals to~$0$.
\end{proof}

The resulting application of Propositions \ref{prop:coeffcomput1} and \ref{prop:coeffcomput2} leads to the following algorithm which allows us to build the approximation of the solution at any fixed order of precision:

\begin{tabular}{|l}
 {\em Initialization:}\\
\hspace{0.5cm} \begin{tabular}{|l}
Compute $a_0$ (compatibility of problem~$\mathrm{PB}^{(2)}$ at the top)\\
Compute $b_0$ (compatibility of problem~$\mathrm{PB}^{(1)}$ at the bottom)\\
Compute $(u_0,p_0,\sigma_0)$\\
Compute $(U_1,P_1,\Sigma_1)$\\
\end{tabular}\\
{\em Iterative process on $k$:}\\
FOR $k=1,...,+\infty$, DO\\
\hspace{0.5cm} \begin{tabular}{|l}
Compute $a_k$ (compatibility of problem~$\mathrm{PB}^{(k+2)}$ at the top)\\
Compute $b_k$ (compatibility of problem~$\mathrm{PB}^{(k+1)}$ at the bottom)\\
Compute $(u_k,p_k,\sigma_k)$\\
Compute $(U_{k+1},P_{k+1},\Sigma_{k+1})$\\
\end{tabular}\\
END
\end{tabular}

\section{Concluding remarks}

\subsection{Boundary conditions} The diffusive Oldroyd model is generally associated to Neumann boundary conditions for the elastic stress tensor \cite{ORL00}, as suggested by the interpretation of the stress diffusion term as arising from the diffusion of polymeric dumbbells \cite{EKL89}. However some authors also considered Dirichlet boundary conditions \cite{RMC06} or mixed boundary conditions \cite{AOF07}. We emphasize that the method that has been developped in this paper readily adapts to the consideration of Dirichlet boundary conditions: the definition of the elementary problems has to be adapted and, in particular, the behaviour at infinity of the boundary layer elastic tensor corrector is completely determined by an exponential decay towards a constant which can be counter-balanced by using a suitable elementary problem at next order.

\subsection{Influence of the curvature of the channel} 

The analysis of the roughness effects has been led in a particular geometry: a channel. When considering space-varying profiles of the boundary such as nozzles or more general converging-diverging profiles, additional coupling effects have to be taken into account. Although the methodology developped in this paper still applies, source terms have to be added in the elementary problems in order to include the scale effects at the macroscopic scale onto the boundary term which serves as a correction in the boundary layer problems. However, there is no additional difficulty from the mathematical point of view, although it tends to increase the complexity of the description of the elementary problems.

\subsection{The non-diffusive model}

In many studies the standard Oldroyd model is considered without diffusion of the elastic stress tensor. The link between the standard model and the diffusive model has been investigated from the numerical point of view by the authors \cite{CM14} by considering the vanishing diffusion process in the diffusive model. In the context of the roughness issue, the adaptation of our analysis is questionable when considering the standard Oldroyd model only. Although the two models are very close (at least formally in the regime $\Di \rightarrow 0$), the asymptotic expansion proposed in this paper does not apply to the case $\Di=0$, even formally. This is due to the degeneracy of the boundary conditions: first, the loss of the boundary conditions in the standard model prevents us from developping the current strategy which is based upon the correction of the boundary terms at higher orders~; second, ignoring the specific treatment of the boundary conditions that was done in the diffusive model provides inconsistent elementary problems. 

\bibliographystyle{plain}
\bibliography{bibliography}

\end{document}